\theoremstyle{plain}
\newtheorem{theorem}                {Theorem}      [section]
\newtheorem*{theorem1}                {Theorem \ref{thm:spheres}}
\newtheorem*{theorem2}                {Theorem \ref{th:appl3}}
\newtheorem{proposition}  [theorem]  {Proposition}
\newtheorem{corollary}    [theorem]  {Corollary}
\newtheorem{lemma}        [theorem]  {Lemma}
\theoremstyle{definition}
\newtheorem{definition}   [theorem]  {Definition}
\DeclareMathOperator{\trace}{trace}
 \DeclareMathOperator{\id}{I}
\DeclareMathOperator{\cst}{constant}
\numberwithin{equation}{section}
\begin{document}

\title[Surfaces with parallel mean curvature]
{Surfaces with parallel mean curvature in $\mathbb{S}^3\times\mathbb{R}$ and $\mathbb{H}^3\times\mathbb{R}$}

\author{Dorel~Fetcu}
\author{Harold~Rosenberg}

\address{Department of Mathematics\\
"Gh. Asachi" Technical University of Iasi\\
Bd. Carol I no. 11 \\
700506 Iasi, Romania} \email{dfetcu@math.tuiasi.ro}

\curraddr{IMPA\\ Estrada Dona Castorina\\ 110, 22460-320 Rio de
Janeiro, Brasil} \email{dorel@impa.br}

\address{IMPA\\ Estrada Dona Castorina\\ 110, 22460-320 Rio de
Janeiro, Brasil} \email{rosen@impa.ro}

\thanks{The first author was supported by a Post-Doctoral Fellowship "P\'os-Doutorado S\^enior
(PDS)" offered by FAPERJ, Brazil.}

\begin{abstract} We prove a Simons type equation for non-minimal surfaces with parallel mean curvature vector (pmc surfaces) in $M^3(c)\times\mathbb{R}$, where $M^3(c)$ is a $3$-dimensional space form.
Then, we use this equation in order to characterize certain complete non-minimal pmc surfaces.
\end{abstract}

\subjclass[2000]{53A10, 53C42}

\keywords{surfaces with parallel mean curvature vector, Simons type
equation}

\maketitle

\section{Introduction}

In 1968, J. Simons discovered a fundamental formula for the
Laplacian of the second fundamental form of a minimal submanifold in
a Riemannian manifold.  He then used it  to characterize certain
minimal submanifolds of a sphere and  Euclidean space (see
\cite{JS}). One year later, K. Nomizu and B. Smyth generalized
Simons' equation for hypersurfaces of constant mean curvature (cmc
hypersurfaces) in a space form (see \cite{NS}). This was extended,
in B. Smyth's work \cite{S}, to the more general case of a
submanifold with parallel mean curvature vector (pmc submanifold) in
a space form. Over the years such equations, called Simons type
equations, turned out to be very useful, a great number of authors
using them in the study of cmc and pmc submanifolds (see, for
example, \cite{AdC,E,WS}).

Nowadays, the study of cmc surfaces in the Euclidean space and, more
generally, in space forms is a classical subject in the field of
Differential Geometry, well known papers like \cite{HH} by H. Hopf
and \cite{C} by S.-S. Chern being representative examples for the
literature on this topic. When the codimension is greater than $1$,
a natural generalization of cmc surfaces are pmc surfaces, which
have been intensively studied in the last four decades, among the
first papers devoted to this subject being \cite{DF} by D. Ferus,
\cite{CL} by B.-Y. Chen and G. D. Ludden, \cite {DH} by D. A.
Hoffman and \cite{Y} by S.-T. Yau. All results in these papers were
obtained in the case when the ambient space is a space form.

The next natural step was taken by U. Abresch and H. Rosenberg, who
studied in \cite{AR,AR2} cmc surfaces and obtained Hopf type results
in product spaces  $M^2(c)\times\mathbb{R}$, where $M^2(c)$ is a
complete simply-connected surface with constant curvature $c$, as
well as the homogeneous $3$-manifolds $Nil(3)$,
$\widetilde{PSL(2,\mathbb{R})}$ and Berger spheres. Some of their
results in \cite{AR} were extended to pmc surfaces in product spaces
of  $M^n(c)\times\mathbb{R}$, where $M^n(c)$ is an $n$-dimensional
space form, in \cite{AdCT1,AdCT} by H. Alencar, M. do Carmo and R.
Tribuzy.

In a  recent paper M. Batista  derived a Simons type equation,
involving the traceless part of the second fundamental form of a cmc
surface in $M^2(c)\times\mathbb{R}$ (see \cite{B}), and found
several applications.

In this paper we compute the Laplacian of the squared norm of the
traceless part $\phi$ of the second fundamental form $\sigma$ of a pmc
surface in a product space $M^3(c)\times\mathbb{R}$ and, using this
Simons type formula, we characterize some of these surfaces. The main results of our paper are the following.

\begin{theorem1} Let $\Sigma^2$ be an immersed pmc $2$-sphere in $M^n(c)\times\mathbb{R}$, such that
\begin{enumerate}
\item $|T|^2=0$ or $|T|^2\geq\frac{2}{3}$ and $|\sigma|^2\leq c(2-3|T|^2)$, if $c<0$;

\item $|T|^2\leq\frac{2}{3}$ and $|\sigma|^2\leq c(2-3|T|^2)$, if $c>0$,
\end{enumerate}
where $T$ is the tangent part of the unit vector $\xi$ tangent to
$\mathbb{R}$. Then, $\Sigma^2$ is either a minimal surface in a totally umbilical hypersurface of $M^n(c)$ or a standard sphere in $M^3(c)$.
\end{theorem1}

\begin{theorem2} Let $\Sigma^2$ be an immersed
complete non-minimal pmc surface in $\bar M=M^3(c)\times\mathbb{R}$,
with $c>0$ and mean curvature vector $H$. Assume
\begin{enumerate}
\item[i)] $|\phi|^2\leq 2|H|^2+2c-\frac{5c}{2}|T|^2$, and

\item[ii)] \begin{enumerate}
\item[a)]$|T|=0$, or

\item[b)] $|T|^2>\frac{2}{3}$ and $|H|^2\geq\frac{c|T|^2(1-|T|^2)}{3|T|^2-2}$.
\end{enumerate}
\end{enumerate}
Then either
\begin{enumerate}
\item $|\phi|^2=0$ and $\Sigma^2$ is a round sphere in $M^3(c)$, or

\item $|\phi|^2=2|H|^2+2c$ and $\Sigma^2$ is a torus $\mathbb{S}^1(r)\times\mathbb{S}^1(\sqrt{\frac{1}{c}-r^2})$, $r^2\neq \frac{1}{2c}$, in $M^3(c)$.
\end{enumerate}
\end{theorem2}

\section{Preliminaries}

Let $M^n(c)$ be a simply-connected $n$-dimensional manifold, with
constant sectional curvature $c$, and consider the product manifold
$\bar M=M^n(c)\times\mathbb{R}$. The expression of the curvature
tensor $\bar R$ of such a manifold can be obtained from
$$
\langle\bar R(X,Y)Z,W\rangle=c\{\langle d\pi Y, d\pi Z\rangle\langle d\pi X, d\pi W\rangle-\langle d\pi X, d\pi Z\rangle\langle d\pi Y, d\pi W\rangle\},
$$
where $\pi:\bar M=M^n(c)\times\mathbb{R}\rightarrow M^n(c)$ is the projection map. After a straightforward computation we get
\begin{equation}\label{eq:barR}
\begin{array}{ll}
\bar R(X,Y)Z=&c\{\langle Y, Z\rangle X-\langle X, Z\rangle Y-\langle Y,\xi\rangle\langle Z,\xi\rangle X+\langle X,\xi\rangle\langle Z,\xi\rangle Y\\ \\&+\langle X,Z\rangle\langle Y,\xi\rangle\xi-\langle Y,Z\rangle\langle X,\xi\rangle\xi\},
\end{array}
\end{equation}
where $\xi$ is the unit vector tangent to $\mathbb{R}$.

Now, let $\Sigma^2$ be an immersed surface in $\bar M$, and denote by $R$ its curvature tensor. Then, from the equation of Gauss
$$
\begin{array}{ll}
\langle R(X,Y)Z,W\rangle=&\langle\bar R(X,Y)Z,W\rangle\\ \\&+\sum_{\alpha=3}^{n+1}\{\langle A_{\alpha}Y,Z\rangle\langle A_{\alpha}X,W\rangle-\langle A_{\alpha}X,Z\rangle\langle A_{\alpha}Y,W\rangle\},
\end{array}
$$
we obtain
\begin{equation}\label{eq:R}
\begin{array}{ll}
R(X,Y)Z=&c\{\langle Y, Z\rangle X-\langle X, Z\rangle Y-\langle Y,T\rangle\langle Z,T\rangle X+\langle X,T\rangle\langle Z,T\rangle Y\\ \\&+\langle X,Z\rangle\langle Y,T\rangle T-\langle Y,Z\rangle\langle X,T\rangle T\}\\ \\&+\sum_{\alpha=3}^{n+1}\{\langle A_{\alpha}Y,Z\rangle A_{\alpha}X-\langle A_{\alpha}X,Z\rangle A_{\alpha}Y\},
\end{array}
\end{equation}
where $T$ is the component of $\xi$ tangent to the surface, $A$ is the shape operator defined by the equation of Weingarten
$$
\bar\nabla_XV=-A_VX+\nabla^{\perp}_XV,
$$
for any vector field $X$ tangent to $\Sigma^2$ and any vector
field $V$ normal to the surface.  Here $\bar\nabla$ is the
Levi-Civita connection on $\bar M$ and $\nabla^{\perp}$ is the connection in the normal bundle, and $A_{\alpha}=A_{E_{\alpha}}$, $\{E_{\alpha}\}_{\alpha=3}^{n+1}$ being a local orthonormal frame field in the normal bundle.

\begin{definition} If the mean curvature vector $H$ of the surface $\Sigma^2$ is
parallel in the normal bundle, i.e.
$\nabla^{\perp}H=0$, then $\Sigma^2$ is called a \textit{pmc surface}.
\end{definition}

We will end this section by recalling the Omori-Yau Maximum Principle, which shall be used later.

\begin{theorem}[\cite{Y2}]\label{OY} If $M$ is a complete Riemannian manifold with Ricci curvature bounded from below, then for any smooth function $u\in C^2(M)$ with $\sup_M u<+\infty$ there exists a sequence of points $\{p_k\}_{k\in\mathbb{N}}\subset M$ satisfying
$$
\lim_{k\rightarrow\infty}u(p_k)=\sup_Mu,\quad |\nabla u|(p_k)<\frac{1}{k}\quad\textnormal{and}\quad \Delta u(p_k)<\frac{1}{k}.
$$

\end{theorem}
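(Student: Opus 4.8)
The plan is to prove this by the classical perturbation-and-comparison argument, in which the role of the Ricci lower bound is precisely to control the Laplacian of the distance function. Write $u^{\ast}=\sup_M u<+\infty$, set $n=\dimension M$, and suppose $\ricci\geq -(n-1)B^2$ for some constant $B\geq 0$. Fix a base point $o\in M$ and let $r(x)=d(o,x)$. The first step is to produce a smooth exhaustion function $\gamma$ with a bounded gradient and a Laplacian bounded from above. I would take $\gamma=f\circ r$ with $f(t)=\log(1+t^2)$; then $\gamma\geq 0$, $\gamma(x)\to+\infty$ as $x\to\infty$, and $|\nabla\gamma|=\frac{2r}{1+r^2}\leq 1$ everywhere. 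The Laplacian comparison theorem --- this is exactly where the curvature hypothesis enters --- gives, away from the cut locus of $o$, the bound $\Delta r\leq (n-1)B\coth(Br)$ (read as $(n-1)/r$ when $B=0$). Since $|\nabla r|=1$ one has $\Delta\gamma=f''(r)+f'(r)\,\Delta r$, and because $f'\geq 0$ with $f'(t)\to 0$ and $\coth(Br)\to 1$ as $t\to\infty$, while near $o$ the function $\gamma$ is smooth (as $r^2$ is), one checks that $\Delta\gamma\leq C$ for a constant $C=C(n,B)$ on all of $M$.

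The second step is the perturbation itself. For $\varepsilon>0$ set $v_\varepsilon=u-\varepsilon\gamma$. Since $u\leq u^{\ast}$ and $\gamma\to+\infty$ at infinity, $v_\varepsilon\to-\infty$ at infinity, so $v_\varepsilon$ is bounded above and attains its global maximum at some $p_\varepsilon\in M$. At $p_\varepsilon$ the first-order condition $\nabla v_\varepsilon(p_\varepsilon)=0$ gives $\nabla u(p_\varepsilon)=\varepsilon\,\nabla\gamma(p_\varepsilon)$, hence $|\nabla u|(p_\varepsilon)\leq\varepsilon$, while the second-order condition $\Delta v_\varepsilon(p_\varepsilon)\leq 0$ gives $\Delta u(p_\varepsilon)\leq\varepsilon\,\Delta\gamma(p_\varepsilon)\leq\varepsilon C$. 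To locate the value, use $u(p_\varepsilon)\geq v_\varepsilon(p_\varepsilon)\geq v_\varepsilon(x)=u(x)-\varepsilon\gamma(x)$ for every $x$: choosing $x$ with $u(x)>u^{\ast}-\delta$ and letting $\varepsilon\to 0$ shows $\liminf_{\varepsilon\to 0}u(p_\varepsilon)\geq u^{\ast}-\delta$, so that $u(p_\varepsilon)\to u^{\ast}$. Finally I would choose $\varepsilon=\varepsilon_k$ with $\varepsilon_k\max(1,C)<\frac{1}{k}$ and set $p_k=p_{\varepsilon_k}$; then $u(p_k)\to u^{\ast}$, $|\nabla u|(p_k)<\frac{1}{k}$ and $\Delta u(p_k)<\frac{1}{k}$, as required.

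The main obstacle is that the maximum point $p_\varepsilon$ may lie on the cut locus of $o$, where $r$, and hence $\gamma$, fails to be $C^2$, so the differential tests above do not literally apply. I would remove this difficulty with Calabi's trick: take a unit-speed minimizing geodesic from $o$ to $p_\varepsilon$ and move the base point a small distance $s>0$ along it to a point $o_s$. Then $p_\varepsilon$ no longer lies in the cut locus of $o_s$, and the function $\gamma_s=f\bigl(s+d(o_s,\cdot)\bigr)$ is smooth near $p_\varepsilon$; by the triangle inequality $s+d(o_s,x)\geq d(o,x)$, so $\gamma_s\geq\gamma$ near $p_\varepsilon$ with equality at $p_\varepsilon$ (since there $d(o_s,p_\varepsilon)=r(p_\varepsilon)-s$), and the same gradient and (for $s$ small) Laplacian bounds hold at $p_\varepsilon$. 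Consequently $u-\varepsilon\gamma_s$ still has a local maximum at $p_\varepsilon$ and is smooth there, so the first- and second-order conditions apply verbatim and yield the same estimates. This reduces the non-smooth case to the smooth computation of the first two paragraphs and completes the argument.
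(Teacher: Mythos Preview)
The paper does not prove this statement at all: Theorem~\ref{OY} is quoted from \cite{Y2} as a black box and then applied in the proof of Theorem~\ref{th:appl1}, so there is no in-paper argument to compare your proposal against. Your write-up is the standard Omori--Yau argument (perturb $u$ by a proper function built from the distance, use the Laplacian comparison theorem under the Ricci lower bound to control $\Delta\gamma$, and invoke Calabi's trick to bypass the cut locus), and it is essentially the proof one finds in \cite{Y2}; the details you give are correct. A couple of minor points worth tightening if you keep this as a self-contained proof: when you bound $\Delta\gamma_s(p_\varepsilon)$ in the Calabi step you need the constant to be uniform in both $s$ and in $r(p_\varepsilon)$, which follows from $f'(L)\,\coth\bigl(B(L-s)\bigr)$ being bounded for $0<s<L/2$ and all $L>0$; and the compact case ($M$ compact, so the supremum is attained) should be disposed of at the outset.
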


\section{A formula for pmc surfaces in $M^n(c)\times\mathbb{R}$}\label{s1}

Let $\Sigma^2$ be an immersed surface in $M^n(c)\times\mathbb{R}$, with mean curvature vector $H$. In this section we shall prove a formula for the Laplacian of the squared norm of $A_V$, where $V$ is a normal vector to the surface, such that $V$ is parallel in the normal bundle, i.e. $\nabla^{\perp}V=0$, and $\trace A_V=\cst$.

\begin{lemma}\label{lemma:com}
If $U$ and $V$ are normal vectors to the surface and $V$ is parallel in the normal bundle, then $[A_V,A_U]=0$, i.e. $A_V$ commutes with $A_U$.
\end{lemma}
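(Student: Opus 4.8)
The plan is to use the Codazzi equation for the ambient product space to control the normal components of $\bar R$ and then exploit the hypothesis $\nabla^\perp V=0$. First I would write down the Codazzi equation in the form
$$
(\nabla_X^\perp\sigma)(Y,Z)-(\nabla_Y^\perp\sigma)(X,Z)=(\bar R(X,Y)Z)^\perp,
$$
and observe from \eqref{eq:barR} that the normal part of $\bar R(X,Y)Z$ involves only the function $\langle Z,\xi\rangle$ times the normal part of $\xi$, i.e. $(\bar R(X,Y)Z)^\perp = c\,(\langle X,Z\rangle\langle Y,T\rangle-\langle Y,Z\rangle\langle X,T\rangle)\,\xi^\perp$, which is \emph{symmetric} in the roles that matter once we pair against a parallel normal field. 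Pairing the Codazzi identity with $V$ and using $\nabla^\perp V=0$ converts $\langle(\nabla_X^\perp\sigma)(Y,Z),V\rangle$ into $(\nabla_X A_V)(Y)$ paired with $Z$, so we get an expression for $(\nabla_X A_V)Y-(\nabla_Y A_V)X$ purely in terms of $T$ and $c$.

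Next I would bring in the second normal field $U$. The key algebraic fact I expect to need is that for a surface (real dimension two) a symmetric operator $A_V$ with $\trace A_V$ constant... actually, more to the point: I would compute the covariant derivative of the commutator-type quantity, or alternatively argue pointwise. At a point $p$ where $A_V$ has distinct eigenvalues, choose an orthonormal eigenframe $\{e_1,e_2\}$ of $A_V$; then the Codazzi relation just derived expresses the off-diagonal entries of $\nabla A_V$ in terms of $T$, and feeding this back one finds that the normal curvature operator, equivalently $\langle R^\perp(e_1,e_2)V,U\rangle$, vanishes. The Ricci equation
$$
\langle R^\perp(X,Y)V,U\rangle=\langle[A_V,A_U]X,Y\rangle+\langle(\bar R(X,Y)V)^\perp,U\rangle
$$
together with the observation that $(\bar R(X,Y)V)^\perp=0$ for any normal $V$ (since \eqref{eq:barR} shows $\bar R(X,Y)V$ is a combination of $X,Y,\xi$ with coefficients involving $\langle V,\xi\rangle$, and $V$ normal forces the $X,Y$ parts to drop while the $\xi$-part, when $V\perp\Sigma$, still needs checking) then yields $\langle[A_V,A_U]e_1,e_2\rangle=0$. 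Since $[A_V,A_U]$ is skew-symmetric on a $2$-plane, it is determined by this single entry, so $[A_V,A_U]=0$ at $p$; by continuity and density it holds everywhere.

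The main obstacle I anticipate is the handling of $(\bar R(X,Y)V)^\perp$: from \eqref{eq:barR} one sees $\bar R(X,Y)V$ lies in $\Span\{X,Y,\xi\}$, so its normal component is $\big(c(\langle X,V\rangle\langle Y,\xi\rangle-\langle Y,V\rangle\langle X,\xi\rangle)\big)$ times nothing useful plus a $\xi^\perp$ term $c(\langle Y,V\rangle\langle X,\xi\rangle-\langle X,V\rangle\langle Y,\xi\rangle)$... and since $V$ is normal while $X,Y$ are tangent, $\langle X,V\rangle=\langle Y,V\rangle=0$, so $(\bar R(X,Y)V)^\perp=0$ cleanly. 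This is what makes the ambient product structure behave, for this computation, exactly like a space form. The only genuinely delicate point is therefore the density argument: one must rule out that $A_V$ is proportional to the identity on an open set without being so globally, but on such an open set $[A_V,A_U]=0$ trivially, so no difficulty arises. Thus the lemma follows.
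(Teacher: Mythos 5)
Your endgame coincides with the paper's entire proof: the Ricci equation together with the two observations that $\langle\bar R(X,Y)V,U\rangle=0$ (which you verify correctly from \eqref{eq:barR} --- for tangent $X,Y$ and normal $V$ the coefficients of the $\xi$-terms are $\langle X,V\rangle=\langle Y,V\rangle=0$, and the surviving terms are tangent to $\Sigma^2$) and that $R^{\perp}(X,Y)V=0$. But you obscure, and in fact mis-justify, the second of these. The vanishing of $R^{\perp}(X,Y)V$ has nothing to do with the Codazzi equation or with eigenframes of $A_V$: it is immediate from the hypothesis, since $R^{\perp}(X,Y)V=\nabla^{\perp}_X\nabla^{\perp}_YV-\nabla^{\perp}_Y\nabla^{\perp}_XV-\nabla^{\perp}_{[X,Y]}V$ and every term dies when $\nabla^{\perp}V=0$. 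Your proposed route --- Codazzi controls the off-diagonal entries of $\nabla A_V$, and ``feeding this back'' the normal curvature vanishes --- is not a derivation: Codazzi is the tangential statement about the antisymmetrized derivative of $\sigma$ and says nothing about $R^{\perp}$, which is governed precisely by the Ricci equation you already wrote down. Once both curvature terms are seen to vanish, $\langle[A_V,A_U]X,Y\rangle=0$ for all tangent $X,Y$ at every point, so the choice of eigenframe, the restriction to points where $A_V$ has distinct eigenvalues, and the closing density argument are all unnecessary. The conclusion you reach is correct, but the proof should be the two-line Ricci-equation argument, not the Codazzi detour.
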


\begin{proof}
The conclusion follows easily, from the Ricci equation,
$$
\langle R^{\perp}(X,Y)V,U\rangle=\langle[A_V,A_U]X,Y\rangle+\langle\bar R(X,Y)V,U\rangle,
$$
since $R^{\perp}(X,Y)V=0$ and \eqref{eq:barR} implies that $\langle\bar R(X,Y)V,U\rangle=0$,
\end{proof}

Now, from the Codazzi equation,
$$
\begin{array}{cl}
\langle \bar R(X,Y)Z,V\rangle=&\langle\nabla^{\perp}_X\sigma(Y,Z),V\rangle-\langle\sigma(\nabla_XY,Z),V\rangle
-\langle\sigma(Y,\nabla_XZ),V\rangle\\ \\&-\langle\nabla^{\perp}_Y\sigma(X,Z),V\rangle+\langle\sigma(\nabla_YX,Z),V\rangle
+\langle\sigma(X,\nabla_YZ),V\rangle,
\end{array}
$$
where $\sigma$ is the second fundamental form of $\Sigma^2$, we get
$$
\begin{array}{rl}
\langle \bar R(X,Y)Z,V\rangle=&X(\langle A_VY,Z\rangle)-\langle\sigma(Y,Z),\nabla^{\perp}_XV\rangle
-\langle A_V(\nabla_XY),Z\rangle\\ \\&-\langle A_VY,\nabla_XZ\rangle-Y(\langle A_VX,Z\rangle)+\langle\sigma(X,Z),\nabla^{\perp}_YV\rangle
\\ \\&+\langle A_V(\nabla_YX),Z\rangle+\langle A_VX,\nabla_YZ\rangle\\ \\=&\langle(\nabla_XA_V)Y-(\nabla_YA_V)X,Z\rangle,
\end{array}
$$
since $\nabla^{\perp}V=0$. Therefore, using \eqref{eq:barR}, we obtain
\begin{equation}\label{eq:Codazzi}
(\nabla_XA_V)Y=(\nabla_YA_V)X+c\langle V,N\rangle(\langle Y,T\rangle X-\langle X,T\rangle Y),
\end{equation}
where $N$ is the normal part of $\xi$.

Next, we have
\begin{equation}\label{eq:Laplacian}
\frac{1}{2}\Delta|A_V|^2=|\nabla A_V|^2+\langle\nabla^2A_V,A_V\rangle,
\end{equation}
where we extended the metric $\langle,\rangle$ to the tensor space in the standard way.

In order to calculate the second term in the right hand side of \eqref{eq:Laplacian}, we shall use a method introduced in \cite{NS}.

Let us write
$$
C(X,Y)=\nabla_X(\nabla_YA_V)-\nabla_{\nabla_XY}A_V,
$$
and note that the fact that the torsion of $\nabla$ vanishes, together with the definition of the curvature tensor $R$ on the surface, implies
\begin{equation}\label{eq:C}
C(X,Y)=C(Y,X)+[R(X,Y),A_V].
\end{equation}
Next, consider an orthonormal basis $\{e_1,e_2\}$ in $T_p\Sigma^2$, $p\in\Sigma^2$, extend $e_1$ and $e_2$ to vector fields $E_1$ and $E_2$ in a neighborhood of $p$ such that $\nabla E_i=0$ at $p$, and let $X$ be a tangent vector field such that $\nabla X=0$. Obviously, we have, at $p$,
$$
(\nabla^2A_V)X=\sum_{i=1}^2\nabla_{E_i}(\nabla_{E_i}A_V)X=\sum_{i=1}^2C(E_i,E_i)X.
$$

Using equation \eqref{eq:Codazzi}, we get, at $p$,
$$
\begin{array}{lcl}
C(E_i,X)E_i&=&(\nabla_{E_i}(\nabla_XA_V))E_i-(\nabla_{\nabla_{E_i}X}A_V)E_i\\ \\&=&\nabla_{E_i}((\nabla_XA_V)E_i)-(\nabla_XA_V)(\nabla_{E_i}E_i)\\ \\&=&\nabla_{E_i}((\nabla_{E_i}A_V)X)+c\nabla_{E_i}(\langle V,N\rangle(\langle E_i,T\rangle X-\langle X,T\rangle E_i))
\end{array}
$$
and then
\begin{equation}\label{eq:1}
\begin{array}{lcl}
C(E_i,X)E_i&=&(\nabla_{E_i}(\nabla_{E_i}A_V))X+(\nabla_{E_i}A_V)(\nabla_{E_i}X)\\ \\&&-c\langle A_VE_i,T\rangle(\langle E_i,T\rangle X-\langle X,T\rangle E_i)\\ \\&&+c\langle V,N\rangle(\langle A_NE_i,E_i\rangle X-\langle A_NX,E_i\rangle E_i)\\ \\&=&C(E_i,E_i)X-c\langle A_VE_i,T\rangle(\langle E_i,T\rangle X-\langle X,T\rangle E_i)\\ \\&&+c\langle V,N\rangle(\langle A_NE_i,E_i\rangle X-\langle A_NX,E_i\rangle E_i),
\end{array}
\end{equation}
where we used $\sigma(E_i,T)=-\nabla^{\perp}_{E_i}N$ and $\nabla_{E_i}T=A_NE_i$, which follow from the fact that $\xi$ is parallel.

We also have
\begin{equation}\label{eq:2}
C(X,E_i)E_i=\nabla_{X}((\nabla_{E_i}A_V)E_i),
\end{equation}
and, from \eqref{eq:C}, \eqref{eq:1} and \eqref{eq:2}, we get
$$
\begin{array}{lcl}
C(E_i,E_i)X&=&C(E_i,X)E_i+c\langle A_VE_i,T\rangle(\langle E_i,T\rangle X-\langle X,T\rangle E_i)\\ \\&&-c\langle V,N\rangle(\langle A_NE_i,E_i\rangle X-\langle A_NX,E_i\rangle E_i)\\ \\&=&C(X,E_i)E_i+[R(E_i,X),A_V]E_i\\ \\&&+c\langle A_VE_i,T\rangle(\langle E_i,T\rangle X-\langle X,T\rangle E_i)\\ \\&&-c\langle V,N\rangle(\langle A_NE_i,E_i\rangle X-\langle A_NX,E_i\rangle E_i)
\end{array}
$$
which means that
$$
\begin{array}{lcl}
C(E_i,E_i)X&=&\nabla_{X}((\nabla_{E_i}A_V)E_i)+[R(E_i,X),A_V]E_i\\ \\&&+c\langle A_VE_i,T\rangle(\langle E_i,T\rangle X-\langle X,T\rangle E_i)\\ \\&&-c\langle V,N\rangle(\langle A_NE_i,E_i\rangle X-\langle A_NX,E_i\rangle E_i).
\end{array}
$$
As $A_V$ is symmetric, it follows that also $\nabla_{E_i}A_V$ is symmetric, and then, from \eqref{eq:Codazzi}, one obtains
$$
\begin{array}{lcl}
\langle\sum_{i=1}^2(\nabla_{E_i}A_V)E_i,Z\rangle&=&\sum_{i=1}^2\langle E_i,(\nabla_{E_i}A_V)Z\rangle=\sum_{i=1}^2\langle E_i,(\nabla_{Z}A_V)E_i\rangle\\ \\&&+c\langle V,N\rangle\sum_{i=1}^2\langle E_i,\langle Z,T\rangle E_i-\langle E_i,T\rangle Z\rangle\\ \\&=&\trace(\nabla_ZA_V)+c\langle V,N\rangle\langle T,Z\rangle\\ \\&=&Z(\trace A_V)+c\langle V,N\rangle\langle T,Z\rangle\\ \\&=&c\langle V,N\rangle\langle T,Z\rangle,
\end{array}
$$
for any vector $Z$ tangent to $\Sigma^2$, since $\trace A_V=\cst$.

Therefore, at $p$, we have
$$
\begin{array}{lcl}
(\nabla^2A_V)X&=&\sum_{i=1}^2C(E_i,E_i)X\\ \\&=&c\nabla_X(\langle V,N\rangle T)+\sum_{i=1}^2[R(E_i,X),A_V]E_i\\ \\&&+c\sum_{i=1}^2\langle A_VE_i,T\rangle(\langle E_i,T\rangle X-\langle X,T\rangle E_i)\\ \\&&-c\sum_{i=1}^2\langle V,N\rangle(\langle A_NE_i,E_i\rangle X-\langle A_NX,E_i\rangle E_i)
\end{array}
$$
and then, since $\bar\nabla_X\xi=0$ implies that $\sigma(X,T)=-\nabla^{\perp}_{X}N$ and $\nabla_{X}T=A_NX$,
\begin{equation}\label{eq:nabla1}
\begin{array}{lcl}
(\nabla^2A_V)X&=&\sum_{i=1}^2[R(E_i,X),A_V]E_i+c\{2\langle V,N\rangle A_NX-\langle A_VX,T\rangle T\\ \\&&+\langle A_VT,T\rangle X-\langle X,T\rangle A_VT-2\langle H,N\rangle\langle V,N\rangle X\}.
\end{array}
\end{equation}

From the Gauss equation \eqref{eq:R} of the surface $\Sigma^2$ and Lemma \ref{lemma:com}, we get
$$
\begin{array}{lcl}
\sum_{i=1}^2R(E_i,X)A_VE_i&=&c\sum_{i=1}^2\{\langle X,A_VE_i\rangle E_i-\langle E_i,A_VE_i\rangle X\\ \\&&-
\langle X,T\rangle\langle A_VE_i,T\rangle E_i+\langle E_i,T\rangle\langle A_VE_i,T\rangle X
\\ \\&&+\langle E_i,A_VE_i\rangle\langle X,T\rangle T-\langle X,A_VE_i\rangle\langle E_i,T\rangle T\}
\\ \\&&+\sum_{i=1}^2\sum_{\alpha=3}^{n+1}\{\langle A_{\alpha}X,A_VE_i\rangle A_{\alpha}E_i\\ \\&&-\langle A_{\alpha}E_i,A_VE_i\rangle A_{\alpha}X\},
\end{array}
$$
which means that
$$
\begin{array}{lcl}
\sum_{i=1}^2R(E_i,X)A_VE_i&=&c\{A_VX-(\trace A_V)X+(\trace A_V)\langle X,T\rangle
T\\ \\&&-\langle A_VX,T\rangle T
-\langle X,T\rangle A_VT+\langle A_VT,T\rangle X\}
\\ \\&&+\sum_{\alpha=3}^{n+1}\{A_VA_{\alpha}^2 X-(\trace(A_VA_{\alpha}))A_{\alpha}X\},
\end{array}
$$
and
$$
\begin{array}{lcl}
\sum_{i=1}^2A_VR(E_i,X)E_i&=&c\sum_{i=1}^2\{\langle X,E_i\rangle A_VE_i-\langle E_i,E_i\rangle A_VX\\ \\&&-\langle X,T\rangle\langle E_i,T\rangle A_VE_i+\langle E_i,T\rangle\langle E_i,T\rangle A_VX\\ \\&&+\langle E_i,E_i\rangle\langle X,T\rangle A_VT-\langle X,E_i\rangle\langle E_i,T\rangle A_VT\}\\ \\&&+\sum_{i=1}^2\sum_{\alpha=3}^{n+1}\{\langle A_{\alpha}X,E_i\rangle A_VA_{\alpha}E_i\\ \\&&-\langle A_{\alpha}E_i,E_i\rangle A_VA_{\alpha}X\}\\ \\&=&-c(1-|T|^2)A_VX\\ \\&&+\sum_{\alpha=3}^{n+1}\{A_VA_{\alpha}^2X-(\trace A_{\alpha})A_VA_{\alpha}X\}.
\end{array}
$$
Finally, replacing in equation \eqref{eq:nabla1}, we find
$$
\begin{array}{ll}
(\nabla^2A_V)X=&c\{(2-|T|^2)A_VX+2\langle A_VT,T\rangle X-2\langle A_VX,T\rangle T-2\langle X,T\rangle A_VT\\ \\&+2\langle V,N\rangle A_NX-2\langle H,N\rangle\langle V,N\rangle X\\ \\&-(\trace A_V)X+(\trace A_V)\langle X,T\rangle T\}\\ \\&+\sum_{\alpha=3}^{n+1}\{(\trace A_{\alpha})A_VA_{\alpha}X-(\trace(A_VA_{\alpha}))A_{\alpha}X\},
\end{array}
$$
and, after a straightforward computation,
$$
\begin{array}{lcl}
\langle\nabla^2A_V,A_V\rangle&=&\sum_{i=1}^2\langle(\nabla^2A_V)E_i,A_VE_i\rangle\\ \\&=&c\{(2-|T|^2)|A_V|^2-4|A_VT|^2+3(\trace A_V)\langle A_VT,T\rangle\\ \\&&+2(\trace(A_NA_V))\langle V,N\rangle-(\trace A_V)^2\\ \\&&-2(\trace A_V)\langle H,N\rangle\langle V,N\rangle\}\\ \\&&+\sum_{\alpha=3}^{n+1}\{(\trace A_{\alpha})(\trace(A_V^2A_{\alpha}))-(\trace(A_VA_{\alpha}))^2\}.
\end{array}
$$

Thus, from \eqref{eq:Laplacian}, we obtain the following

\begin{proposition}\label{p:delta} Let $\Sigma^2$ be an immersed surface in $M^n(c)\times\mathbb{R}$. If $V$ is a normal vector field, parallel in the normal bundle, with $\trace A_V=\cst$, then
\begin{equation}\label{eq:delta2}
\begin{array}{lcl}
\frac{1}{2}\Delta|A_V|^2&=&|\nabla A_V|^2+c\{(2-|T|^2)|A_V|^2-4|A_VT|^2+3(\trace A_V)\langle A_VT,T\rangle\\ \\&&+2(\trace(A_NA_V))\langle V,N\rangle-(\trace A_V)^2\\ \\&&-2(\trace A_V)\langle H,N\rangle\langle V,N\rangle\}\\ \\&&+\sum_{\alpha=3}^{n+1}\{(\trace A_{\alpha})(\trace(A_V^2A_{\alpha}))-(\trace(A_VA_{\alpha}))^2\},
\end{array}
\end{equation}
where $\{E_{\alpha}\}_{\alpha=3}^{n+1}$ is a local orthonormal frame field in the normal bundle.
\end{proposition}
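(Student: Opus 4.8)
The statement is a Bochner--Simons type identity, so the plan is to start from the general formula \eqref{eq:Laplacian},
\[
\frac{1}{2}\Delta|A_V|^2=|\nabla A_V|^2+\langle\nabla^2A_V,A_V\rangle,
\]
and to produce an explicit expression for the rough Laplacian $\nabla^2A_V$ of the symmetric operator $A_V$. The two hypotheses are precisely what makes this work: $\nabla^\perp V=0$ turns the Codazzi equation into the relative identity \eqref{eq:Codazzi} (combining Codazzi with the ambient curvature \eqref{eq:barR}), and $\trace A_V=\cst$ annihilates the divergence term $Z(\trace A_V)$ that would otherwise survive.

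To compute $\nabla^2A_V$ I would follow the scheme of \cite{NS}. Fix $p\in\Sigma^2$, take an orthonormal basis $\{e_1,e_2\}$ of $T_p\Sigma^2$, extend it to fields $E_i$ with $\nabla E_i=0$ at $p$, and take a tangent field $X$ with $\nabla X=0$ at $p$, so that $(\nabla^2A_V)X=\sum_iC(E_i,E_i)X$ where $C(X,Y)=\nabla_X(\nabla_YA_V)-\nabla_{\nabla_XY}A_V$. The structural input is the symmetry defect $C(X,Y)=C(Y,X)+[R(X,Y),A_V]$ coming from torsion-freeness and the definition of $R$, together with $C(X,E_i)E_i=\nabla_X((\nabla_{E_i}A_V)E_i)$. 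I would then differentiate \eqref{eq:Codazzi} to express $C(E_i,X)E_i$ in terms of $C(E_i,E_i)X$ plus curvature-type corrections, using $\nabla_XT=A_NX$ and $\sigma(X,T)=-\nabla^\perp_XN$ (consequences of $\bar\nabla_X\xi=0$ with $\xi=T+N$) to differentiate the $c\langle V,N\rangle$ term. Since $\nabla_ZA_V$ is symmetric, \eqref{eq:Codazzi} and $\trace A_V=\cst$ give $\sum_i(\nabla_{E_i}A_V)E_i=c\langle V,N\rangle T$; putting all this together produces a formula for $(\nabla^2A_V)X$ consisting of the curvature term $\sum_i[R(E_i,X),A_V]E_i$ plus an explicit sum of zeroth-order terms built from $A_VX$, $A_NX$, $A_VT$, $T$ and the scalars $\langle V,N\rangle$, $\langle H,N\rangle$.

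The remaining work is to expand the curvature term. Writing $\sum_i[R(E_i,X),A_V]E_i=\sum_iR(E_i,X)(A_VE_i)-\sum_iA_VR(E_i,X)E_i$ and substituting the Gauss equation \eqref{eq:R}, the space-form part of $R$ contributes terms in $A_VX$, $(\trace A_V)X$, $\langle X,T\rangle T$, $\langle A_VX,T\rangle T$ and $A_VT$, while the extrinsic part contributes sums over $\alpha$ of $A_VA_\alpha^2X$, $(\trace A_\alpha)A_VA_\alpha X$ and $(\trace(A_VA_\alpha))A_\alpha X$; this is exactly where Lemma \ref{lemma:com}, i.e.\ $[A_V,A_\alpha]=0$, is used, since it lets these collapse into the single combination $\sum_\alpha\{(\trace A_\alpha)A_VA_\alpha X-(\trace(A_VA_\alpha))A_\alpha X\}$. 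Substituting back gives a closed formula for $(\nabla^2A_V)X$, and finally I would contract, $\langle\nabla^2A_V,A_V\rangle=\sum_i\langle(\nabla^2A_V)E_i,A_VE_i\rangle$, read off the traces $|A_V|^2$, $|A_VT|^2$, $(\trace A_V)\langle A_VT,T\rangle$, $(\trace(A_NA_V))\langle V,N\rangle$, $(\trace A_V)^2$, and plug into \eqref{eq:Laplacian} to obtain \eqref{eq:delta2}.

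The hard part is not conceptual but bookkeeping. Unlike the classical pmc computation in a space form, here the factor $\mathbb{R}$ injects $\xi=T+N$ into every differentiation, so one accumulates a whole family of extra lower-order terms carrying $T$, $N$, $A_N$ and $\langle H,N\rangle$; keeping their coefficients and signs correct through the two successive contractions — first contracting the curvature term via \eqref{eq:R}, then contracting the whole identity with $A_V$ — is where errors are most likely to appear. A secondary delicate point is to invoke Lemma \ref{lemma:com} at exactly the right moment, as it is what turns the otherwise unmanageable $\alpha$-sums into the symmetric expression displayed in \eqref{eq:delta2}.
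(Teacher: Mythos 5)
Your proposal is correct and follows essentially the same route as the paper: the Nomizu--Smyth scheme applied to $C(X,Y)$, the modified Codazzi identity \eqref{eq:Codazzi} with the extra $c\langle V,N\rangle T$ divergence term killed by $\trace A_V=\cst$, the expansion of $\sum_i[R(E_i,X),A_V]E_i$ via \eqref{eq:R} with Lemma \ref{lemma:com} collapsing the $\alpha$-sums, and the final contraction against $A_V$. You also correctly locate the only real difficulty (the bookkeeping of the $T$, $N$, $A_N$ correction terms), so there is nothing to add.
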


\begin{corollary}\label{coro:phi3} If $\Sigma^2$ is an immersed non-minimal pmc surface in $M^n(c)\times\mathbb{R}$ and $\phi_H$ is the operator defined by $\phi_H=\frac{1}{|H|}A_H-|H|\id$, then
\begin{equation}\label{eq:deltaH}
\begin{array}{lcl}
\frac{1}{2}\Delta|\phi_H|^2&=&|\nabla \phi_H|^2+\{c(2-3|T|^2)+4|H|^2-|\sigma|^2\}|\phi_H|^2\\ \\&&-2c|H|\langle\phi_HT,T\rangle+\frac{2c}{|H|}\langle H,N\rangle\trace(A_N\phi_H).
\end{array}
\end{equation}
\end{corollary}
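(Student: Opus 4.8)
The plan is to specialize Proposition~\ref{p:delta} to $V=H$ and then rewrite everything in terms of $\phi_H$. Since $\Sigma^2$ is pmc we have $\nabla^{\perp}H=0$, and a parallel normal section has constant norm, so $\trace A_H=2\langle H,H\rangle=2|H|^2=\cst$; hence \eqref{eq:delta2} applies with $V=H$. From $A_H=|H|\phi_H+|H|^2\id$, the fact that $|H|$ is a nonzero constant, and $\trace\phi_H=\frac{1}{|H|}\trace A_H-2|H|=0$, one obtains $|A_H|^2=|H|^2|\phi_H|^2+2|H|^4$ and $\nabla A_H=|H|\nabla\phi_H$, so that
$$
\tfrac12\Delta|\phi_H|^2=\tfrac1{|H|^2}\bigl(\text{right-hand side of \eqref{eq:delta2} with }V=H\bigr),
$$
and in particular the gradient term becomes $|\nabla\phi_H|^2$.

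Next I would expand the $c$-bracket of \eqref{eq:delta2}, substituting $A_H=|H|\phi_H+|H|^2\id$ and $\trace(A_NA_H)=|H|\trace(A_N\phi_H)+2|H|^2\langle H,N\rangle$ (the latter using $\trace A_N=2\langle H,N\rangle$). A careful but routine computation shows that all terms of order $|H|^4$ (including those weighted by $|T|^2$) cancel, and so do all $\langle H,N\rangle^2$-terms, leaving
$$
c\bigl\{(2-|T|^2)|H|^2|\phi_H|^2-4|H|^2|\phi_HT|^2-2|H|^3\langle\phi_HT,T\rangle+2|H|\langle H,N\rangle\trace(A_N\phi_H)\bigr\}.
$$
To produce the stated coefficient $c(2-3|T|^2)$ one uses that $\phi_H$ is symmetric and traceless on the $2$-dimensional $T_p\Sigma^2$, so by the Cayley--Hamilton theorem $\phi_H^2=\tfrac12|\phi_H|^2\,\id$; hence $|\phi_HT|^2=\langle\phi_H^2T,T\rangle=\tfrac12|\phi_H|^2|T|^2$, which turns $-4|H|^2|\phi_HT|^2$ into $-2|H|^2|T|^2|\phi_H|^2$ and combines with $(2-|T|^2)|H|^2|\phi_H|^2$ to give $(2-3|T|^2)|H|^2|\phi_H|^2$. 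Dividing by $|H|^2$ yields exactly the $c$-terms of \eqref{eq:deltaH}.

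It remains to treat $S:=\sum_{\alpha=3}^{n+1}\{(\trace A_\alpha)(\trace(A_H^2A_\alpha))-(\trace(A_HA_\alpha))^2\}$. By Lemma~\ref{lemma:com}, $A_H$ commutes with every $A_\alpha$; these are symmetric operators on a $2$-dimensional space, so at a point where $A_H$ has two distinct eigenvalues they are simultaneously diagonalizable, while at a point where $A_H$ is a multiple of $\id$ every summand of $S$ vanishes, as does $|\phi_H|^2$. Writing $A_H$ and $A_\alpha$ as diagonal, with diagonal entries $a_1,a_2$ and $b_1,b_2$ in a common eigenbasis, a direct computation gives $(\trace A_\alpha)(\trace(A_H^2A_\alpha))-(\trace(A_HA_\alpha))^2=b_1b_2(a_1-a_2)^2=(a_1-a_2)^2\det A_\alpha$. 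Summing, $S=(a_1-a_2)^2\sum_\alpha\det A_\alpha$, and I would finish using $\sum_\alpha\det A_\alpha=\tfrac12\bigl(\sum_\alpha(\trace A_\alpha)^2-|\sigma|^2\bigr)=\tfrac12(4|H|^2-|\sigma|^2)$ (since $\trace A_\alpha=2\langle H,E_\alpha\rangle$ gives $\sum_\alpha(\trace A_\alpha)^2=4|H|^2$) together with $(a_1-a_2)^2=2|H|^2|\phi_H|^2$ (the eigenvalues of the traceless $\phi_H$ are $\pm\tfrac1{\sqrt2}|\phi_H|$, scaled by $|H|$ from those of $A_H$). This gives $S=|H|^2|\phi_H|^2(4|H|^2-|\sigma|^2)$, so $\tfrac1{|H|^2}S=(4|H|^2-|\sigma|^2)|\phi_H|^2$, the remaining term of \eqref{eq:deltaH}; collecting all pieces proves the corollary. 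The main obstacle is the delicate bookkeeping in the second paragraph — verifying the exact cancellation of all $|H|^4$- and $\langle H,N\rangle^2$-contributions — and spotting the two surface-specific identities $\phi_H^2=\tfrac12|\phi_H|^2\id$ and $(a_1-a_2)^2=2|H|^2|\phi_H|^2$ that collapse the leftovers into the stated form.
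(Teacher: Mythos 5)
Your proof is correct and follows essentially the same route as the paper: specialize Proposition~\ref{p:delta} to $V=H$, rewrite everything via $A_H=|H|\phi_H+|H|^2\id$ using the surface identity $\phi_H^2=\tfrac12|\phi_H|^2\id$ (all the $|H|^4$- and $\langle H,N\rangle^2$-terms do cancel as you claim), and evaluate the $\alpha$-sum by simultaneous diagonalization via Lemma~\ref{lemma:com}. Your packaging of each summand as $(a_1-a_2)^2\det A_\alpha$ with $\sum_\alpha\det A_\alpha=\tfrac12(4|H|^2-|\sigma|^2)$, and your pointwise disposal of umbilic points (where both sides vanish), is a marginally cleaner bookkeeping of the sum than the paper's trace-by-trace computation on the open dense non-umbilic set, but the substance is identical.
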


\begin{proof} From the definition of $\phi_H$ we have $\nabla\phi_H=\frac{1}{|H|}\nabla A_H$, $|\phi_H|^2=\frac{1}{|H|^2}|A_H|^2-2|H|^2$ and
$
\frac{1}{|H|^2}|A_HT|^2=\frac{1}{2}|T|^2|\phi_H|^2+|H|^2|T|^2+2|H|\langle\phi_HT,T\rangle$,
where we used the fact that $|\phi_HT|^2=\frac{1}{2}|T|^2|\phi_H|^2$, which can be easily verified by working in a basis that diagonalizes $\phi_H$ and taking into account that $\trace \phi_H=0$.

Next, from equation \eqref{eq:nabla1}, with $V=H$, we get $\langle\nabla^2A_H,\id\rangle=0$ and, therefore, from Proposition~\ref{p:delta}, it follows
\begin{equation}\label{eq:phipartial}
\begin{array}{lcl}
\frac{1}{2}\Delta|\phi_H|^2&=&|\nabla \phi_H|^2+c(2-3|T|^2)|\phi_H|^2-2c|H|\langle\phi_HT,T\rangle\\ \\&&+\frac{2c}{|H|}\langle H,N\rangle\trace(A_N\phi_H)\\ \\&&+\sum_{\alpha=3}^{n+1}\{(\trace A_{\alpha})(\trace(\phi_H^2A_{\alpha}))-(\trace(\phi_HA_{\alpha}))^2\}.
\end{array}
\end{equation}

Now, let us consider the local orthonormal frame field
$\{E_3=\frac{H}{|H|},E_4,\ldots,E_{n+1}\}$ in the normal bundle. One sees that $\trace A_3=2|H|$, $\trace A_{\alpha}=0$,
$\alpha>3$, and hence
$$
\sum_{\alpha=3}^{n+1}(\trace A_{\alpha})(\trace(\phi_H^2A_{\alpha}))=2|H|(\trace(\phi_H^2A_3))=2(\trace(\phi_H^2A_H)).
$$
From the definition of $\phi_H$, we get $
\phi_H^2A_H=|H|\phi_H^3+|H|^2\phi_H^2$
and, since $\trace\phi_H^3=0$,
$$
2(\trace\phi_H^2A_H)=2|H|\trace\phi_H^3+2|H|^2\trace \phi_H^2=2|H|^2|\phi_H|^2.
$$
We have just proved that
\begin{equation}\label{eq:tr1}
\sum_{\alpha=3}^{n+1}(\trace A_{\alpha})(\trace(\phi_H^2A_{\alpha}))=2|H|^2|\phi_H|^2.
\end{equation}

As we have seen in Lemma \ref{lemma:com}, since $H$ is parallel, we have that $A_H$ commutes with $A_U$, for any normal vector field
$U$. Then, either there exists a basis that diagonalizes $A_U$, for
all vectors $U$ normal to $\Sigma^2$, or the surface is
pseudo-umbilical, i.e. $A_{H}=|H|^2\id$. Moreover, since the map
$p\in\Sigma^2\rightarrow(A_H-\mu\id)(p)$, where $\mu$ is a constant,
is analytic, it follows that if $H$ is an umbilical direction, then
this either holds on the whole surface or only on a closed set
without interior points.

Since $H$ is an umbilical direction everywhere implies
that $\phi_H$ vanishes on the surface, and then \eqref{eq:deltaH} is
verified, we will study only the case when $H$ is an umbilical
direction on a closed set without interior points, which means that
$H$ is not umbilical in an open dense set. We will work on this set
and then will extend our result throughout $\Sigma^2$ by continuity.

Let $\{e_1,e_2\}$ be a basis that diagonalizes $A_U$, for all
vectors $U$ normal to the surface. Therefore, with respect to this
basis, for $\alpha>3$, since $\trace A_{\alpha}=\trace\phi_H=0$, we
have $ A_{\alpha}=\left(\begin{array}{cc}\mu_{\alpha}&0\\
\\0&-\mu_{\alpha}\end{array}\right)$ and
$\phi_H=\left(\begin{array}{cc}a&0\\ \\0&-a\end{array}\right)$, and
then
\begin{equation}\label{eq:alfa}
(\trace(\phi_HA_{\alpha}))^2=4a^2\mu_{\alpha}^2=|A_{\alpha}|^2|\phi_H|^2.
\end{equation}
We also obtain $\phi_HA_3=\phi_H^2+|H|\phi_H$, which leads to
\begin{equation}\label{eq:a3}
(\trace(\phi_HA_3))^2=|\phi_H|^4=(|A_3|^2-2|H|^2)|\phi_H|^2.
\end{equation}
Finally, replacing \eqref{eq:tr1}, \eqref{eq:alfa} and \eqref{eq:a3} in \eqref{eq:phipartial}, we get equation
\eqref{eq:deltaH}.
\end{proof}

\section{A Simons type equation and applications}

In the following we shall derive a Simons type equation for
non-minimal pmc surfaces in $M^3(c)\times\mathbb{R}$ and then we
will use it in order to characterize some of these surfaces.

Everywhere in this section $\Sigma^2$ will be an immersed
non-minimal pmc surface in a product space $M^3(c)\times\mathbb{R}$,
with mean curvature vector $H$ and the Gaussian curvature $K$.

Let us consider the local orthonormal frame field $\{E_3=\frac{H}{|H|},E_4\}$ in
the normal bundle and denote by $\phi_3=\phi_H=A_3-|H|\id$ and $\phi_4=A_4$.
The normal part of $\xi$ can be written as $N=\nu_3E_3+\nu_4E_4$, where $\nu_3=\langle\xi, E_3\rangle$ and
$\nu_4=\langle\xi,E_4\rangle$.

Since $H$ is parallel in the normal bundle, it follows that also $E_4$ is parallel in the
normal bundle. We use the same argument as in the proof of Corollary
\ref{coro:phi3} to see that either $H$ is an umbilical direction on
the whole surface or it is not umbilical on an open dense set. In
both cases, it is easy to verify that
$$
(\trace A_3)(\trace(\phi_4^2A_3))=2|H|^2|\phi_4|^2\quad\textnormal{and}
\quad(\trace(\phi_4A_3))^2=(|A_3|^2-2|H|^2)|\phi_4|^2,
$$
and then, since $(\trace\phi_4^2)^2=|\phi_4|^4$, $\trace\phi_4=0$
and $|\phi_4T|^2=\frac{1}{2}|T|^2|\phi_4|^2$, from Proposition
\ref{p:delta}, we can derive the following formula for the Laplacian
of $|\phi_4|^2$:
\begin{equation}\label{eq:deltaphi4}
\frac{1}{2}\Delta|\phi_4|^2=|\nabla\phi_4|^2+\{c(2-3|T|^2)+4|H|^2-|\sigma|^2\}|\phi_4|^2+2c\nu_4\trace(A_N\phi_4).
\end{equation}

Next, let $\phi$ be the traceless part of the second fundamental
form $\sigma$ of the surface, given by
$$
\phi(X,Y)=\sigma(X,Y)-\langle X,Y\rangle H=\sum_{\alpha=3}^{4}\langle\phi_{\alpha}X,Y\rangle E_{\alpha}.
$$
We have $|\phi|^2=|\phi_3|^2+|\phi_4|^2=|\sigma|^2-2|H|^2$ and then, using \eqref{eq:deltaH} and \eqref{eq:deltaphi4}, we can state the following

\begin{proposition}\label{p:phi} If $\Sigma^2$ is an immersed non-minimal pmc surface in $M^3(c)\times\mathbb{R}$ and $\phi$ is the traceless part of its second fundamental form, then
\begin{equation}\label{eq:deltaphi}
\begin{array}{cl}
\frac{1}{2}\Delta|\phi|^2=&|\nabla\phi_3|^2+|\nabla\phi_4|^2-|\phi|^4+\{c(2-3|T|^2)+2|H|^2\}|\phi|^2\\ \\&-2c\langle\phi(T,T),H\rangle+2c|\nu_3\phi_3+\nu_4\phi_4|^2,
\end{array}
\end{equation}
or, equivalently,
$$
\begin{array}{cl}
\frac{1}{2}\Delta|\phi|^2=&|\nabla\phi_3|^2+|\nabla\phi_4|^2-|\phi|^4+\{c(2-3|T|^2)+2|H|^2\}|\phi|^2\\ \\&-2c\langle\phi(T,T),H\rangle+2c|A_N|^2-4c\langle H,N\rangle^2.
\end{array}
$$
\end{proposition}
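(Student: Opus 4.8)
The plan is to derive \eqref{eq:deltaphi} purely algebraically, by adding the two Simons type formulas already established: equation \eqref{eq:deltaH} for $|\phi_H|^2=|\phi_3|^2$ (recall $\phi_H=A_3-|H|\id=\phi_3$) and equation \eqref{eq:deltaphi4} for $|\phi_4|^2$. Since the traceless part of $\sigma$ is $\phi(X,Y)=\langle\phi_3X,Y\rangle E_3+\langle\phi_4X,Y\rangle E_4$, we have $|\phi|^2=|\phi_3|^2+|\phi_4|^2$ and $|\sigma|^2=|\phi|^2+2|H|^2$. The two formulas share the coefficient $c(2-3|T|^2)+4|H|^2-|\sigma|^2$, multiplying $|\phi_3|^2$ in \eqref{eq:deltaH} and $|\phi_4|^2$ in \eqref{eq:deltaphi4}; upon adding and substituting $|\sigma|^2=|\phi|^2+2|H|^2$, this contributes $\big(c(2-3|T|^2)+2|H|^2-|\phi|^2\big)|\phi|^2=\{c(2-3|T|^2)+2|H|^2\}|\phi|^2-|\phi|^4$, while the gradient terms combine to $|\nabla\phi_3|^2+|\nabla\phi_4|^2$. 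These are precisely the first line of \eqref{eq:deltaphi}, so everything reduces to matching the remaining curvature terms.

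For the term $-2c|H|\langle\phi_HT,T\rangle$ of \eqref{eq:deltaH} I would use $H=|H|E_3$ together with $\phi(T,T)=\langle\phi_3T,T\rangle E_3+\langle\phi_4T,T\rangle E_4$, so that $\langle\phi(T,T),H\rangle=|H|\langle\phi_3T,T\rangle=|H|\langle\phi_HT,T\rangle$; this rewrites that term as $-2c\langle\phi(T,T),H\rangle$. It then remains to show that the sum of $\frac{2c}{|H|}\langle H,N\rangle\trace(A_N\phi_H)$ from \eqref{eq:deltaH} and $2c\nu_4\trace(A_N\phi_4)$ from \eqref{eq:deltaphi4} equals $2c|\nu_3\phi_3+\nu_4\phi_4|^2$. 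Since $N=\nu_3E_3+\nu_4E_4$ and $H=|H|E_3$, one has $\frac{1}{|H|}\langle H,N\rangle=\nu_3$, and therefore this sum is $2c\,\trace\big(A_N(\nu_3\phi_3+\nu_4\phi_4)\big)$. Writing $A_N=\nu_3A_3+\nu_4A_4=(\nu_3\phi_3+\nu_4\phi_4)+\nu_3|H|\id$ and using $\trace\phi_3=\trace\phi_4=0$, the $\nu_3|H|\id$ part drops out of the trace and we are left with $2c|\nu_3\phi_3+\nu_4\phi_4|^2$, as desired.

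Finally, for the second, equivalent form, I would observe that $\nu_3\phi_3+\nu_4\phi_4=A_N-\nu_3|H|\id$ and expand $|A_N-\nu_3|H|\id|^2=|A_N|^2-2\nu_3|H|\trace A_N+2\nu_3^2|H|^2$; since $\trace A_N=\nu_3\trace A_3=2\nu_3|H|=2\langle H,N\rangle$, this reduces to $|A_N|^2-2\langle H,N\rangle^2$, so $2c|\nu_3\phi_3+\nu_4\phi_4|^2=2c|A_N|^2-4c\langle H,N\rangle^2$. I do not expect any conceptual obstacle in this proof: the only slightly delicate point — verifying the identities $(\trace A_3)(\trace(\phi_4^2A_3))=2|H|^2|\phi_4|^2$ and $(\trace(\phi_4A_3))^2=(|A_3|^2-2|H|^2)|\phi_4|^2$ in the non-pseudo-umbilical open dense set and then extending by continuity — has already been dealt with in establishing \eqref{eq:deltaphi4}, so what remains is careful bookkeeping with the normal frame $\{E_3,E_4\}$ and the scalars $\nu_3,\nu_4$, where the main risk is sign or factor errors in the coefficients of $c$.
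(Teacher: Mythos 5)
Your proposal is correct and follows exactly the paper's route: the paper obtains Proposition \ref{p:phi} precisely by adding \eqref{eq:deltaH} and \eqref{eq:deltaphi4}, using $|\phi|^2=|\phi_3|^2+|\phi_4|^2=|\sigma|^2-2|H|^2$. Your bookkeeping of the curvature terms (including the identification $2c\,\trace\bigl(A_N(\nu_3\phi_3+\nu_4\phi_4)\bigr)=2c|\nu_3\phi_3+\nu_4\phi_4|^2=2c|A_N|^2-4c\langle H,N\rangle^2$) is accurate and in fact supplies details the paper leaves implicit.
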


\begin{theorem}\label{th:appl1} Let $\Sigma^2$ be an immersed complete non-minimal pmc surface in
$\bar M=M^3(c)\times\mathbb{R}$, with $c<0$, such that
$$
\sup_{\Sigma^2}|\phi|^2< 2|H|^2+c(4-5|T|^2)\quad\textnormal{and}\quad\langle\phi(T,T),H\rangle\geq 0.
$$
Then $|\phi|^2=0$ and $\Sigma^2$ is a round sphere in $M^3(c)$.
\end{theorem}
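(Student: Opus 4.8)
The plan is to feed the Simons type identity of Proposition~\ref{p:phi} into the Omori--Yau maximum principle. The first step is to turn \eqref{eq:deltaphi} into a clean differential inequality. Since $H$ is parallel, $A_3$ and $A_4$ commute by Lemma~\ref{lemma:com}, so in a common eigenbasis both $\phi_3$ and $\phi_4$ are traceless and diagonal; hence $|\nu_3\phi_3+\nu_4\phi_4|^2\leq(\nu_3^2+\nu_4^2)(|\phi_3|^2+|\phi_4|^2)=(1-|T|^2)|\phi|^2$. Using $c<0$ together with this estimate and the hypothesis $\langle\phi(T,T),H\rangle\geq0$ to control the last two terms of \eqref{eq:deltaphi}, and discarding the nonnegative terms $|\nabla\phi_3|^2+|\nabla\phi_4|^2$, one obtains
\[
\frac{1}{2}\Delta|\phi|^2\geq|\phi|^2\bigl(2|H|^2+c(4-5|T|^2)-|\phi|^2\bigr).
\]

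Next I would verify the hypotheses of Theorem~\ref{OY} for $u=|\phi|^2$. By assumption $|\phi|^2$ is bounded above, and since $|H|$ is constant this makes $|\sigma|^2=|\phi|^2+2|H|^2$ bounded; the Gauss equation \eqref{eq:R} then shows that the Gaussian curvature, hence the Ricci curvature, of the complete surface $\Sigma^2$ is bounded from below. Omori--Yau therefore yields points $p_k$ with $|\phi|^2(p_k)\to\sup_{\Sigma^2}|\phi|^2$ and $\Delta|\phi|^2(p_k)<1/k$. Substituting into the inequality above and letting $k\to\infty$, the strict bound $\sup_{\Sigma^2}|\phi|^2<2|H|^2+c(4-5|T|^2)$ keeps the factor $2|H|^2+c(4-5|T|^2)-|\phi|^2$ bounded below by a positive constant along $\{p_k\}$, forcing $\sup_{\Sigma^2}|\phi|^2=0$, that is, $\phi\equiv0$ on $\Sigma^2$.

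It remains to identify a non-minimal pmc surface with $\phi\equiv0$. Then $A_{E_4}=0$ and $A_{E_3}=|H|\,\id$, and since $|H|$ is constant the left-hand side of the Codazzi equation \eqref{eq:Codazzi} vanishes for $V=E_3$ and for $V=E_4$; hence $c\langle V,N\rangle(\langle Y,T\rangle X-\langle X,T\rangle Y)=0$ for all tangent $X,Y$. Taking $X=T$ and $Y\perp T$ forces $N=0$ at every point where $T\neq0$, so $|T|^2$ is $\{0,1\}$-valued and, being continuous, is identically $0$ or identically $1$. If $|T|^2\equiv1$, then $\xi$ is a parallel tangent vector field, $\Sigma^2$ splits locally as a Riemannian product with $\sigma(T,T)=0$, and so $|\phi|^2\geq|\phi(T,T)|^2=|H|^2>0$, contradicting $\phi\equiv0$. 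Hence $|T|^2\equiv0$, so $\Sigma^2$ lies in a slice $M^3(c)\times\{t_0\}$ and is umbilical there with constant mean curvature $|H|$; the hypothesis now reduces to $0<2|H|^2+4c$, i.e.\ $|H|>\sqrt{-c}$, and a complete umbilical surface in $\mathbb{H}^3(c)$ with mean curvature exceeding $\sqrt{-c}$ is a round sphere.

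The step I expect to be delicate is the passage to the limit in the Omori--Yau argument: $|T|^2$ is not controlled along $\{p_k\}$, so one must be careful that $2|H|^2+c(4-5|T|^2)-|\phi|^2$ does not degenerate to $0$; this is transparent if the hypothesis is read as $\sup_{\Sigma^2}|\phi|^2<\inf_{\Sigma^2}(2|H|^2+c(4-5|T|^2))$. The other point worth attention is the end of the classification — excluding the vertical product and separating the round sphere from horospheres and equidistant surfaces — which is precisely where the inequality $|H|>\sqrt{-c}$, extracted from the surviving case of the hypothesis, is used.
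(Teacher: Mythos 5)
Your proof is correct and follows essentially the same route as the paper's: the Schwarz bound $|\nu_3\phi_3+\nu_4\phi_4|^2\le(1-|T|^2)|\phi|^2$ inserted into \eqref{eq:deltaphi} together with $c<0$ and $\langle\phi(T,T),H\rangle\ge 0$, a lower bound on $K$ from the Gauss equation so that Omori--Yau applies to $u=|\phi|^2$, and finally the classification of complete totally umbilical cmc surfaces in $M^3(c)$ (the paper invokes Lemma~3 of \cite{AdCT} where you rederive the slice condition directly from Codazzi, and it excludes the horosphere via $K>-c>0$ rather than via the mean curvature; note that $0<2|H|^2+4c$ gives $|H|>\sqrt{-2c}$, not merely $|H|>\sqrt{-c}$, which only strengthens your conclusion). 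The uniformity issue you flag in the Omori--Yau limit --- that $2|H|^2+c(4-5|T|^2)-|\phi|^2(p_k)$ must stay bounded away from $0$ even though $|T|^2(p_k)$ is uncontrolled --- is a genuine subtlety, but it is equally present and left unaddressed in the paper's own proof, which implicitly reads the hypothesis exactly as you propose.
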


\begin{proof} First, using the Schwarz inequality, we get
$$
|\nu_3\phi_3+\nu_4\phi_4|^2\leq(\nu_3^2+\nu_4^2)(|\phi_3|^2+|\phi_4|^2)=(1-|T|^2)|\phi|^2,
$$
and this, together with \eqref{eq:deltaphi} and the hypothesis, leads to
\begin{equation}\label{ineq}
\begin{array}{rl}
\frac{1}{2}\Delta|\phi|^2&\geq-|\phi|^4+\{c(4-5|T|^2)+2|H|^2\}|\phi|^2-2c\langle\phi(T,T),H\rangle\\ \\&\geq \{-|\phi|^2+c(4-5|T|^2)+2|H|^2\}|\phi|^2\\ \\&\geq 0.
\end{array}
\end{equation}

On the other hand, we can prove that the Gaussian curvature $K$ of the surface is bounded from below. Indeed, from \eqref{eq:R}, we have
$$
2K=2c(1-|T|^2)+4|H|^2-|\sigma|^2=2c(1-|T|^2)+2|H|^2-|\phi|^2>c(3|T|^2-2)\geq c.
$$

Therefore, since $\Sigma^2$ is complete, the Omori-Yau Maximum Principle holds on the surface. We take $u=|\phi|^2$ in Theorem \ref{OY} and it follows that there exists a sequence of points $\{p_k\}_{k\in\mathbb{N}}\subset \Sigma^2$ such that
$$
\lim_{k\rightarrow\infty}|\phi|^2(p_k)=\sup_{\Sigma^2}|\phi|^2\quad\textnormal{and}\quad \Delta |\phi|^2(p_k)<\frac{1}{k}.
$$
From \eqref{ineq}, we get that
$\lim_{k\rightarrow\infty}|\phi|^2(p_k)=0$ and then
$\sup_{\Sigma^2}|\phi|^2=0$, which implies $|\phi|^2=0$. Since
$\phi_3=A_3-|H|\id=0$, it follows that $H$ is an umbilical direction
and this implies that $\Sigma^2$ is a totally umbilical surface in
$M^3(c)$ (see Lemma $3$ in \cite{AdCT}). Therefore, $\Sigma^2$ is a
horosphere or a round sphere. Since $|\phi|^2<2|H|^2+4c$ implies
that $K>-c>0$, $\Sigma^2$ cannot be flat, and then we conclude that
the surface is a round sphere.
\end{proof}

\begin{theorem}\label{th:appl2} Let $\Sigma^2$ be an immersed complete non-minimal pmc surface in
$\bar M=M^3(c)\times\mathbb{R}$, with $c>0$, such that
$$
|\phi|^2\leq 2|H|^2+c(2-3|T|^2)\quad\textnormal{and}\quad\langle\phi(T,T),H\rangle\leq 0.
$$
Then $\xi$ is normal to the surface and either
\begin{enumerate}
\item $|\phi|^2=0$ and $\Sigma^2$ is a round sphere in $M^3(c)$, or

\item $|\phi|^2=2|H|^2+2c$ and $\Sigma^2$ is a torus $\mathbb{S}^1(r)\times\mathbb{S}^1(\sqrt{\frac{1}{c}-r^2})$, $r^2\neq \frac{1}{2c}$, in $M^3(c)$.
\end{enumerate}
\end{theorem}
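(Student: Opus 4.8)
The plan is to combine the Simons type identity \eqref{eq:deltaphi} with the two hypotheses so as to make $|\phi|^{2}$ a bounded subharmonic function, then to use $K\ge 0$ to force $|\phi|^{2}$ to be constant, and finally to extract the geometry from the resulting rigidity. First I would rewrite \eqref{eq:deltaphi} in the form
\begin{align*}
\tfrac{1}{2}\Delta|\phi|^{2}={}&|\nabla\phi_{3}|^{2}+|\nabla\phi_{4}|^{2}+\bigl(c(2-3|T|^{2})+2|H|^{2}-|\phi|^{2}\bigr)|\phi|^{2}\\
&-2c\langle\phi(T,T),H\rangle+2c|\nu_{3}\phi_{3}+\nu_{4}\phi_{4}|^{2}.
\end{align*}
Since $c>0$, the hypothesis $|\phi|^{2}\le 2|H|^{2}+c(2-3|T|^{2})$ makes the third summand nonnegative and $\langle\phi(T,T),H\rangle\le 0$ makes the fourth nonnegative, while the remaining three are nonnegative automatically; hence $\Delta|\phi|^{2}\ge 0$. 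Moreover $|H|$ is constant because $\Sigma^{2}$ is pmc, and $|T|\le 1$, so $|\phi|^{2}\le 2|H|^{2}+2c$ is bounded.

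Next, the Gauss equation \eqref{eq:R} gives $2K=2c(1-|T|^{2})+2|H|^{2}-|\phi|^{2}$, and the pinching hypothesis yields $2K\ge c|T|^{2}\ge 0$. A complete surface with nonnegative Gaussian curvature is parabolic, so the bounded subharmonic function $|\phi|^{2}$ must be constant and $\Delta|\phi|^{2}\equiv 0$. (One could alternatively run the Omori--Yau principle of Theorem \ref{OY}, as in the proof of Theorem \ref{th:appl1}, but it is parabolicity that delivers the global statement needed here.) Substituting $\Delta|\phi|^{2}\equiv 0$ into the above rewriting of \eqref{eq:deltaphi} forces each of its five nonnegative summands to vanish identically: $\nabla\phi_{3}=\nabla\phi_{4}=0$; $\langle\phi(T,T),H\rangle\equiv 0$; $\nu_{3}\phi_{3}+\nu_{4}\phi_{4}\equiv 0$, equivalently $A_{N}=\nu_{3}|H|\,\id$ (using $A_{3}=\phi_{3}+|H|\,\id$ and $A_{4}=\phi_{4}$); and, pointwise, either $|\phi|^{2}=0$ or $|\phi|^{2}=c(2-3|T|^{2})+2|H|^{2}$.

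Now I would show that $\xi$ is normal, i.e. $T\equiv 0$. Applying the Codazzi equation \eqref{eq:Codazzi} with $V=E_{4}$ and with $V=H$, and using $\nabla A_{E_{4}}=\nabla\phi_{4}=0$ and $\nabla A_{H}=|H|\nabla\phi_{3}=0$, it reduces to $c\nu_{4}(\langle Y,T\rangle X-\langle X,T\rangle Y)=0$ and $c\langle H,N\rangle(\langle Y,T\rangle X-\langle X,T\rangle Y)=0$ for all tangent $X,Y$; choosing $X=T$ and $0\ne Y\perp T$ on the open set $\{T\ne 0\}$ yields $\nu_{4}=\langle H,N\rangle=0$, hence $\nu_{3}=0$, so $N=0$ and $|T|=1$ there. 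By continuity and connectedness either $T\equiv 0$ or $|T|\equiv 1$; in the latter case $N\equiv 0$, so $\nabla_{X}T=A_{N}X=0$, $T$ is a parallel unit field, $\Sigma^{2}$ is flat, and the Gauss equation gives $|\phi|^{2}=2|H|^{2}$, contradicting the hypothesis (which reads $|\phi|^{2}\le 2|H|^{2}-c$ when $|T|=1$) since $c>0$. Thus $T\equiv 0$, $\xi$ is everywhere normal, and $\Sigma^{2}$ lies in a slice $M^{3}(c)\times\{t_{0}\}\cong M^{3}(c)$. On this slice $\bar\nabla\xi=0$ gives $A_{N}=A_{\xi}=0$, so $\nu_{3}=0$, $E_{4}=\pm\xi$ and $A_{4}=0$; hence $H$ is the nowhere vanishing mean curvature vector of $\Sigma^{2}$ in $M^{3}(c)$, the shape operator $A_{3}=\phi_{3}+|H|\,\id$ is parallel, and so $\Sigma^{2}$ has constant principal curvatures in $M^{3}(c)$. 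If $|\phi|^{2}=0$, then $\Sigma^{2}$ is totally umbilical in $M^{3}(c)$, hence a round sphere since $c>0$, which is case (1). If $|\phi|^{2}>0$, then $|\phi|^{2}=c(2-3|T|^{2})+2|H|^{2}=2|H|^{2}+2c$, and $\Sigma^{2}$ is a complete non-umbilical isoparametric surface of the round $3$-sphere $M^{3}(c)$, hence a standard product torus $\mathbb{S}^{1}(r)\times\mathbb{S}^{1}(\sqrt{1/c-r^{2}})$, with $r^{2}\ne 1/(2c)$ because that value gives the minimal Clifford torus, which is excluded since $\Sigma^{2}$ is non-minimal; this is case (2).

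The crux is the step from ``$|\phi|^{2}$ bounded and subharmonic'' to ``$|\phi|^{2}$ constant'': in Theorem \ref{th:appl1} the strict pinching makes the zero-order coefficient in $\Delta|\phi|^{2}$ strictly positive wherever $\phi\ne 0$, so the Omori--Yau principle alone kills $|\phi|^{2}$, whereas here the inequality is borderline and the equality locus is genuine (it produces the torus), so one really needs $K\ge 0$ and parabolicity. The remaining work — proving $\xi$ is normal and invoking the classification of isoparametric surfaces of $\mathbb{S}^{3}$ — is then comparatively routine.
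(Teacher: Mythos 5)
Your argument is correct and shares the paper's overall skeleton: every term in \eqref{eq:deltaphi} is nonnegative under the hypotheses, $K\ge 0$ gives parabolicity via Huber, the bounded subharmonic function $|\phi|^2$ is therefore constant, and the vanishing of each term yields the rigidity. Where you genuinely diverge is in the crux, namely proving $T\equiv 0$. The paper never uses $\nabla\phi_3=\nabla\phi_4=0$; in the case $|\phi|^2=c(2-3|T|^2)+2|H|^2$ it notes that $|T|^2$ must be constant, hence $\langle A_NX,T\rangle=0$, combines this with $A_N=\langle H,N\rangle\id$ (from $\nu_3\phi_3+\nu_4\phi_4=0$) to get $\langle H,N\rangle\langle X,T\rangle=0$, and on the set where $T\neq 0$ deduces $\langle A_HT,T\rangle=0$, which together with $\langle\phi(T,T),H\rangle=0$ forces $|H|^2|T|^2=0$, a contradiction. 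You instead extract $\nabla\phi_3=\nabla\phi_4=0$ from the vanishing of the Laplacian and feed it into the Codazzi equation \eqref{eq:Codazzi} to kill $\nu_4$ and $\langle H,N\rangle$ (hence $\nu_3$) wherever $T\neq 0$, then rule out the clopen alternative $|T|\equiv 1$ by playing the Gauss equation ($|\phi|^2=2|H|^2$ on a flat surface) against the pinching hypothesis ($|\phi|^2\le 2|H|^2-c$ when $|T|=1$). This is clean, handles the cases $|\phi|^2=0$ and $|\phi|^2>0$ uniformly so that the assertion that $\xi$ is normal comes out in one stroke, and at the end replaces the paper's appeal to the flat cmc torus argument of \cite{AdC} and \cite{WS} by the observation that $A_3$ is parallel, hence $\Sigma^2$ is isoparametric in the $3$-sphere; both closings are standard and correct.
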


\begin{proof} From the Gauss equation \eqref{eq:R} of the surface,
since $|\phi|^2\leq 2|H|^2+c(2-3|T|^2)$, we obtain that
$$
2K=2c(1-|T|^2)+4|H|^2-|\sigma|^2=2c(1-|T|^2)+2|H|^2-|\phi|^2\geq c|T|^2\geq 0
$$
and then, a result of A.~Huber in \cite{H} implies that $\Sigma^2$ is a parabolic space.

On the other hand, from \eqref{eq:deltaphi}, we see that
$\Delta|\phi|^2\geq 0$ and then $|\phi|^2$ is a bounded subharmonic
function on a parabolic space. Therefore $|\phi|^2$ is a constant
and, again using \eqref{eq:deltaphi}, we get that
$$
\{-|\phi|^2+c(2-3|T|^2)+2|H|^2\}|\phi|^2=0,\quad\langle\phi(T,T),H\rangle=0
\quad\textnormal{and}\quad\nu_3\phi_3+\nu_4\phi_4=0.
$$
Now we can split our study in two cases.

\textbf{\underline{Case I: $|\phi|^2=0$.}} This case can be handled
exactly like in the proof of Theorem \ref{th:appl1}.

\textbf{\underline{Case II: $|\phi|^2=c(2-3|T|^2)+2|H|^2$.}} Since
$|\phi|^2$ is a constant, it follows that $|T|^2$ is a constant and
then that $\langle\nabla_XT,T\rangle=0$, for any vector $X$ tangent
to $\Sigma^2$. As $\bar\nabla_X\xi=0$ implies that $\nabla_XT=A_NX$,
we have $\langle A_NX,T\rangle=0$. But $\nu_3\phi_3+\nu_4\phi_4=0$
means that $A_N=\langle H,N\rangle\id$, which implies that $\langle
H,N\rangle\langle X,T\rangle=0$, for any tangent vector $X$.
Therefore, either $\xi$ is orthogonal to $\Sigma^2$ at any point, or
only on a closed set without interior points. In this second case,
$\langle H,\xi\rangle=0$ holds on an open dense set. In this set, we
also have $\langle\bar\nabla_TH,\xi\rangle=-\langle
A_HT,T\rangle=0$. We have just shown that $\langle A_3T,T\rangle=0$
on an open dense set and, since we also know that
$$
\langle\phi(T,T),H\rangle=|H|\langle\phi_3 T,T\rangle=|H|(\langle A_3T,T\rangle-|H||T|^2)=0,
$$
one gets that $T=0$ on the whole surface. Hence, $\Sigma^2$ lies in
$M^3(c)$ and its Gaussian curvature is $K=\frac{c}{2}|T|^2=0$. We
use a similar argument to that in the proof of Theorem $1.5$ in
\cite{AdC} (see also \cite{WS}) to conclude.
\end{proof}

\section{A gap Theorem for pmc surfaces with non-negative Gaussian curvature}

In this Section we will prove our main results, Theorem \ref{thm:spheres} and Theorem \ref{th:appl3}. In order to do
that, let us consider an immersed pmc surface $\Sigma^2$ in
$M^n(c)\times\mathbb{R}$, and first we shall compute the Laplacian of
$|T|^2$.

Let $\{e_1,e_2\}$ be an orthonormal in $T_p\Sigma^2$,
$p\in\Sigma^2$, extend $e_1$ and $e_2$ to vector fields $E_1$ and
$E_2$ in a neighborhood of $p$ such that $\nabla E_i=0$ at $p$. At
$p$, we have
$$
\begin{array}{lcl}
\frac{1}{2}\Delta|T|^2&=&\sum_{i=1}^2(\langle\nabla_{E_i}T,\nabla_{E_i}T\rangle+\langle\nabla_{E_i}\nabla_{E_i}T,T\rangle)\\ \\
&=&|A_N|^2+\sum_{i=1}^2\langle\nabla_{E_i}A_NE_i,T\rangle
\end{array}
$$
and, since $\nabla_XA_N$ is symmetric,
$$
\begin{array}{lcl}
\sum_{i=1}^2\langle\nabla_{E_i}A_NE_i,T\rangle&=&\sum_{i=1}^2\langle(\nabla_{E_i}A_N)E_i,T\rangle\\ \\
&=&\sum_{i=1}^2\langle(\nabla_{E_i}A_N)T,E_i\rangle=\sum_{i=1}^2\langle\nabla_{E_i}A_NT
-A_N\nabla_{E_i}T,E_i\rangle\\ \\&=&\sum_{i=1}^2\langle\nabla_{E_i}\nabla_TT
-\nabla_{\nabla_{E_i}T}T,E_i\rangle\\ \\&=&\sum_{i=1}^2\langle\nabla_{E_i}\nabla_TT
+\nabla_{[T,E_i]}T,E_i\rangle\\ \\&=&\sum_{i=1}^2(\langle\nabla_T\nabla_{E_i}T,E_i\rangle
-\langle R(T,E_i)T,E_i\rangle)\\ \\&=&|T|^2K+\sum_{i=1}^2\langle\nabla_TA_NE_i,E_i\rangle\\ \\&=&
|T|^2K+\sum_{i=1}^2T(\langle A_NE_i,E_i\rangle)=|T|^2K+T(\trace A_N)\\ \\&=&|T|^2K+2T(\langle H,N\rangle)
=|T|^2K-2\langle\sigma(T,T),H\rangle\\ \\&=&c|T|^2(1-|T|^2)-\frac{1}{2}|T|^2|\phi|^2-2\langle\phi(T,T),H\rangle-|T|^2|H|^2,
\end{array}
$$
where we used $\nabla_XT=A_NX$ and $\nabla^{\perp}_XN=-\sigma(X,T)$, and $\phi$ is the traceless part of the second fundamental form $\sigma$ of the surface.

Now, we can state

\begin{proposition}\label{pT} If $\Sigma^2$ is an immersed pmc
surface in $M^n(c)\times\mathbb{R}$, then
\begin{equation}\label{eq:deltaT}
\frac{1}{2}\Delta|T|^2=|A_N|^2-\frac{1}{2}|T|^2|\phi|^2-2\langle\phi(T,T),H\rangle+c|T|^2(1-|T|^2)-|T|^2|H|^2.
\end{equation}
\end{proposition}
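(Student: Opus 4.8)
The plan is to compute $\frac12\Delta|T|^2$ pointwise, following the scheme of Section~\ref{s1} (and of \cite{NS}). Fix $p\in\Sigma^2$, pick an orthonormal basis $\{e_1,e_2\}$ of $T_p\Sigma^2$, and extend it to a frame $\{E_1,E_2\}$ with $\nabla E_i=0$ at $p$. The starting point is the pair of structural identities that follow from $\bar\nabla\xi=0$: decomposing $\xi=T+N$ into its parts tangent and normal to $\Sigma^2$ and separating components, one gets $\nabla_XT=A_NX$ and $\nabla^\perp_XN=-\sigma(X,T)$ for every tangent vector $X$. With these in hand, at $p$,
\[
\tfrac12\Delta|T|^2=\sum_{i=1}^2\big(\langle\nabla_{E_i}T,\nabla_{E_i}T\rangle+\langle\nabla_{E_i}\nabla_{E_i}T,T\rangle\big)=|A_N|^2+\sum_{i=1}^2\langle(\nabla_{E_i}A_N)E_i,T\rangle,
\]
so everything reduces to evaluating the last sum.

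Next I would use that $A_N$ is symmetric, hence so is each $\nabla_XA_N$, to rewrite $\langle(\nabla_{E_i}A_N)E_i,T\rangle=\langle(\nabla_{E_i}A_N)T,E_i\rangle$. Note that the Codazzi identity~\eqref{eq:Codazzi} cannot be applied directly, since $N$ is not parallel in the normal bundle; instead the essential trick is to use $A_NX=\nabla_XT$ once more, which turns $(\nabla_{E_i}A_N)T=\nabla_{E_i}(A_NT)-A_N(\nabla_{E_i}T)$ into $\nabla_{E_i}\nabla_TT-\nabla_{\nabla_{E_i}T}T$. Rewriting the second term with a Lie bracket (legitimate because $\nabla E_i=0$ at $p$) and inserting the curvature operator $R(T,E_i)$, the sum splits into a curvature contraction $-\sum_i\langle R(T,E_i)T,E_i\rangle=|T|^2K$ and a ``divergence'' term $\sum_i\langle\nabla_T(A_NE_i),E_i\rangle=T(\trace A_N)$.

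It then remains to identify the two pieces. For the curvature term, plugging in the Gauss equation~\eqref{eq:R} gives $2K=2c(1-|T|^2)+4|H|^2-|\sigma|^2=2c(1-|T|^2)+2|H|^2-|\phi|^2$, hence $|T|^2K=c|T|^2(1-|T|^2)+|T|^2|H|^2-\tfrac12|T|^2|\phi|^2$. For the divergence term, since $H=\tfrac12\trace\sigma$ we have $\trace A_N=2\langle H,N\rangle$, so $T(\trace A_N)=2\langle\nabla^\perp_TH,N\rangle+2\langle H,\nabla^\perp_TN\rangle$; here the pmc hypothesis $\nabla^\perp H=0$ kills the first summand while $\nabla^\perp_TN=-\sigma(T,T)$ gives $T(\trace A_N)=-2\langle\sigma(T,T),H\rangle$, and $\sigma(T,T)=\phi(T,T)+|T|^2H$ turns this into $-2\langle\phi(T,T),H\rangle-2|T|^2|H|^2$. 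Adding $|A_N|^2$, $|T|^2K$ and $T(\trace A_N)$ and collecting terms yields exactly~\eqref{eq:deltaT}.

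The only delicate step is the second paragraph: commuting the covariant derivatives and keeping track of the bracket terms $\nabla_{[T,E_i]}T$ at $p$, and then making sure the resulting contraction is $+|T|^2K$ in the sign convention fixed by~\eqref{eq:R} (for a surface, $\sum_i\langle R(T,E_i)T,E_i\rangle=-|T|^2K$). Everything else is routine substitution, with the pmc assumption entering only through $\nabla^\perp H=0$ in the differentiation of $\langle H,N\rangle$.
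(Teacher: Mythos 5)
Your proposal is correct and follows essentially the same route as the paper: the identities $\nabla_XT=A_NX$, $\nabla^\perp_XN=-\sigma(X,T)$, the symmetry of $\nabla_{E_i}A_N$, the commutation of covariant derivatives producing $|T|^2K+T(\trace A_N)$, and the final substitutions via the Gauss equation and $\nabla^\perp H=0$ all match the paper's computation step for step. The sign check $\sum_i\langle R(T,E_i)T,E_i\rangle=-|T|^2K$ is also consistent with the convention fixed by~\eqref{eq:R}.
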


In \cite{AdCT1} the authors introduced a holomorphic differential, defined on pmc surfaces in $M^n(c)\times\mathbb{R}$ (when $n=2$ this is just the Abresch-Rosenberg differential, defined in \cite{AR}). This holomorphic differential is the $(2,0)$-part of the quadratic form $Q$, given by
$$
Q(X,Y)=2\langle\sigma(X,Y),H\rangle-c\langle X,\xi\rangle\langle Y,\xi\rangle.
$$
Using this result and Proposition \ref{pT}, we can characterize pmc $2$-spheres $\Sigma^2$ immersed in $M^n(c)\times\mathbb{R}$, whose second fundamental form satisfies a certain condition.

\begin{theorem}\label{thm:spheres} Let $\Sigma^2$ be an immersed pmc $2$-sphere in $M^n(c)\times\mathbb{R}$, such that
\begin{enumerate}
\item $|T|^2=0$ or $|T|^2\geq\frac{2}{3}$ and $|\sigma|^2\leq c(2-3|T|^2)$, if $c<0$;

\item $|T|^2\leq\frac{2}{3}$ and $|\sigma|^2\leq c(2-3|T|^2)$, if $c>0$.
\end{enumerate}
Then, $\Sigma^2$ is either a minimal surface in a totally umbilical hypersurface of $M^n(c)$ or a standard sphere in $M^3(c)$.
\end{theorem}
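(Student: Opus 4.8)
The plan is to use the holomorphic differential of \cite{AdCT1} recalled above to convert the topological hypothesis (genus zero) into a pointwise identity for $\langle\phi(T,T),H\rangle$, and then to feed that identity into the formula \eqref{eq:deltaT} for $\Delta|T|^2$. Since $\Sigma^2$ has genus zero, the holomorphic $(2,0)$-part of the quadratic form $Q$ is a holomorphic section of a negative line bundle over $\Sigma^2$, hence vanishes identically; equivalently, the traceless part of the symmetric tensor $Q$ is zero, i.e. $Q=\frac12(\trace Q)\langle\cdot,\cdot\rangle$. Using $\trace Q=2\langle\trace\sigma,H\rangle-c|T|^2=4|H|^2-c|T|^2$, the relation $\langle T,\xi\rangle=|T|^2$, and the splitting $\sigma(T,T)=\phi(T,T)+|T|^2H$, evaluating this identity on the pair $(T,T)$ yields
$$
\langle\phi(T,T),H\rangle=\frac{c}{4}|T|^4
$$
at every point of $\Sigma^2$; in particular, if $\Sigma^2$ is minimal this already forces $T\equiv 0$.

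Next I would substitute this into \eqref{eq:deltaT}. If $T\equiv 0$ — one of the two alternatives allowed when $c<0$ — there is nothing more to do at this stage; otherwise the hypothesis supplies $|\phi|^2=|\sigma|^2-2|H|^2\le c(2-3|T|^2)-2|H|^2$ everywhere, and this is precisely where the sign of $c$ together with the thresholds $|T|^2\le\frac23$ (resp. $|T|^2\ge\frac23$) enter: they make this bound non-vacuous. A direct computation then shows that all the terms involving $|T|^2$ on the right-hand side of \eqref{eq:deltaT} cancel, leaving
$$
\frac12\Delta|T|^2\ \ge\ |A_N|^2\ \ge\ 0 .
$$
Hence $|T|^2$ is a subharmonic function on the compact surface $\Sigma^2$, so it is constant; if this constant were positive, $T$ would be a nowhere-vanishing vector field on $\Sigma^2\cong\mathbb{S}^2$, which is impossible. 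Therefore $T\equiv 0$ in all cases, $\xi$ is everywhere normal to $\Sigma^2$, and the surface is contained in a slice $M^n(c)\times\{t_0\}$, a totally geodesic copy of $M^n(c)$.

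Finally, $\Sigma^2$ is now an immersed pmc $2$-sphere in the space form $M^n(c)$, and the asserted dichotomy — minimal in a totally umbilical hypersurface of $M^n(c)$, or a standard sphere in $M^3(c)$ — follows from the classification of pmc $2$-spheres in space forms (see \cite{DF,CL,DH,Y}). The step I expect to be the main obstacle is the cancellation in the second paragraph: one has to verify that the coefficient $c(2-3|T|^2)$ appearing in the hypothesis is exactly tuned so that, after inserting $\langle\phi(T,T),H\rangle=\frac c4|T|^4$ into \eqref{eq:deltaT}, every power of $|T|^2$ disappears and a manifestly non-negative expression remains; extracting this clean sign out of Proposition \ref{pT} is the heart of the argument, and once it is in place the topology of $\mathbb{S}^2$ finishes the job.
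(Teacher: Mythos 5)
Your argument is correct and follows the paper's own proof essentially step for step: the vanishing of the holomorphic $(2,0)$-part of $Q$ on the sphere gives $\langle\phi(T,T),H\rangle=\frac{c}{4}|T|^4$, which turns \eqref{eq:deltaT} into $\frac12\Delta|T|^2=|A_N|^2+\frac12|T|^2\bigl(c(2-3|T|^2)-|\sigma|^2\bigr)\ge 0$, whence $|T|^2$ is constant and must vanish, reducing to the space-form classification. The only (harmless) deviations are that you exclude a positive constant value of $|T|^2$ via the hairy-ball theorem where the paper argues $A_N=0\Rightarrow\nabla T=0\Rightarrow K=0$, and that your phrase ``all the terms cancel'' should more precisely read ``combine into the manifestly non-negative term displayed above''.
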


\begin{proof} If $\xi$ is orthogonal to the surface in an open connected subset, then this subset lies in $M^n(c)$, and by analyticity, it follows that $\Sigma^2$ lies in $M^n(c)$. In this case we use Theorem $4$ in \cite{Y} to conclude.

Next, let us assume that we are not in the previous case. Then, we can choose an orthonormal frame $\{e_1,e_2\}$ on the surface, such that $e_1=\frac{T}{|T|}$. Since $\Sigma^2$ is a sphere and the $(2,0)$-part of $Q$ is holomorphic, it follows that it vanishes on the surface. This means that $Q(e_1,e_1)=Q(e_2,e_2)$ and $Q(e_1,e_2)=0$. From $Q(e_1,e_1)=Q(e_2,e_2)$, we get $2\langle\sigma(e_1,e_1)-\sigma(e_2,e_2),H\rangle=c|T|^2$. We have
$$
\langle\phi(T,T),H\rangle=\langle\sigma(T,T),H\rangle-|T|^2|H|^2=\frac{1}{2}|T|^2\langle\sigma(e_1,e_1)
-\sigma(e_2,e_2),H\rangle=\frac{1}{4}c|T|^4,
$$
and then \eqref{eq:deltaT} becomes
$$
\frac{1}{2}\Delta|T|^2=|A_N|^2+\frac{1}{2}|T|^2(-|\sigma|^2+c(2-3|T|^2))\geq 0.
$$
Since $\Sigma^2$ is a sphere and $|T|^2$ is a bounded subharmonic function, it results that $|T|^2$ is constant, and then that $|A_N|^2=0$ and $|T|^2(-|\sigma|^2+c(2-3|T|^2))=0$. Since $A_N=0$ and $\xi$ is parallel, we obtain that $\nabla_XT=0$, for any tangent vector $X$, which implies that $K=0$. Since our surface is a sphere, this is a contradiction. Therefore, $\Sigma^2$ lies in $M^n(c)$ and, again using Theorem $4$ in \cite{Y} (see also Theorem $2$ in \cite{AdCT}) we come to the conclusion.
\end{proof}

Next, assume that $\Sigma^2$ is an immersed non-minimal pmc surface
in $M^3(c)\times\mathbb{R}$. Then, from \eqref{eq:deltaphi} and
\eqref{eq:deltaT}, we obtain
\begin{equation}\label{eq:deltasum}
\begin{array}{lcl}
\frac{1}{2}\Delta(|\phi|^2-c|T|^2)&=&|\nabla\phi_3|^2+|\nabla\phi_4|^2+\{-|\phi|^2+\frac{c}{2}(4-5|T|^2)+2|H|^2\}|\phi|^2
\\ \\&&+c|A_N|^2-4c\langle H,N\rangle^2+c|T|^2|H|^2-c^2|T|^2(1-|T|^2).
\end{array}
\end{equation}

Using this equation, we shall prove the following

\begin{theorem}\label{th:appl3} Let $\Sigma^2$ be an immersed
complete non-minimal pmc surface in $\bar M=M^3(c)\times\mathbb{R}$,
with $c>0$. Assume
\begin{enumerate}
\item[i)] $|\phi|^2\leq 2|H|^2+2c-\frac{5c}{2}|T|^2$, and

\item[ii)] \begin{enumerate}
\item[a)]$|T|=0$, or

\item[b)] $|T|^2>\frac{2}{3}$ and $|H|^2\geq\frac{c|T|^2(1-|T|^2)}{3|T|^2-2}$.
\end{enumerate}
\end{enumerate}
Then either
\begin{enumerate}
\item $|\phi|^2=0$ and $\Sigma^2$ is a round sphere in $M^3(c)$, or

\item $|\phi|^2=2|H|^2+2c$ and $\Sigma^2$ is a torus $\mathbb{S}^1(r)\times\mathbb{S}^1(\sqrt{\frac{1}{c}-r^2})$, $r^2\neq \frac{1}{2c}$, in $M^3(c)$.
\end{enumerate}
\end{theorem}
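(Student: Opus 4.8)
The plan is to substitute the hypotheses into the identity \eqref{eq:deltasum} for $\tfrac12\Delta(|\phi|^2-c|T|^2)$, show that its right-hand side is nonnegative, and then exploit parabolicity. First I would rewrite the curvature remainder of \eqref{eq:deltasum} in a usable form. Writing $N=\nu_3E_3+\nu_4E_4$ one has $\langle H,N\rangle=|H|\nu_3$ and $\nu_3^2+\nu_4^2=|N|^2=1-|T|^2$, and, since $A_3=\phi_3+|H|\id$, $A_4=\phi_4$ and $\trace\phi_3=\trace\phi_4=0$,
$$|A_N|^2=|\nu_3\phi_3+\nu_4\phi_4|^2+2|H|^2\nu_3^2.$$
Hence the terms $c|A_N|^2-4c\langle H,N\rangle^2+c|T|^2|H|^2-c^2|T|^2(1-|T|^2)$ in \eqref{eq:deltasum} equal $c|\nu_3\phi_3+\nu_4\phi_4|^2+c|H|^2(|T|^2-2\nu_3^2)-c^2|T|^2(1-|T|^2)$. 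In case (ii)(a), $T=0$ forces $\nabla_XT=A_NX=0$, so $A_N=0$ and $\nu_3=0$, and this remainder vanishes. In case (ii)(b), since $3|T|^2-2>0$ and $\nu_3^2\leq 1-|T|^2$, it is bounded below by $c|H|^2(3|T|^2-2)-c^2|T|^2(1-|T|^2)$, which is $\geq 0$ precisely by the assumption $|H|^2\geq\frac{c|T|^2(1-|T|^2)}{3|T|^2-2}$. As hypothesis (i) is exactly the statement $-|\phi|^2+\tfrac{c}{2}(4-5|T|^2)+2|H|^2\geq 0$, equation \eqref{eq:deltasum} then yields $\Delta(|\phi|^2-c|T|^2)\geq 0$ in both cases.

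Next I would establish parabolicity. By the Gauss equation \eqref{eq:R}, $2K=2c(1-|T|^2)+2|H|^2-|\phi|^2\geq\tfrac{c}{2}|T|^2\geq 0$ using (i), so the complete surface $\Sigma^2$ has $K\geq 0$ and hence is parabolic by Huber's theorem \cite{H}. Since $|H|$ is constant, (i) bounds $|\phi|^2$, so $|\phi|^2-c|T|^2$ is a bounded subharmonic function on a parabolic surface and therefore constant; thus $\Delta(|\phi|^2-c|T|^2)=0$. Because every summand on the right of \eqref{eq:deltasum} is nonnegative, each must vanish: $\nabla\phi_3=\nabla\phi_4=0$; $\big(-|\phi|^2+\tfrac{c}{2}(4-5|T|^2)+2|H|^2\big)|\phi|^2=0$, i.e. $|\phi|^2=0$ or $|\phi|^2=2|H|^2+2c-\tfrac{5c}{2}|T|^2$; and all inequalities of the previous paragraph are equalities, which in case (ii)(b) gives $\nu_4=0$, $\phi_3=0$ and $|H|^2=\frac{c|T|^2(1-|T|^2)}{3|T|^2-2}$.

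I would then show that $\xi$ is normal. From $\nabla\phi_3=\nabla\phi_4=0$ the norms $|\phi_3|^2,|\phi_4|^2$ are constant, hence so are $|\phi|^2$ and $|T|^2$, whence $\langle\nabla_XT,T\rangle=0$ for all tangent $X$. In case (ii)(a) one has $T=0$ from the start. In case (ii)(b) we have just found $\phi_3=0$ and $\nu_4=0$, so $A_N=\nu_3A_3=|H|\nu_3\,\id$ and $\nabla_XT=A_NX=|H|\nu_3X$; since $\nu_3^2=1-|T|^2>0$ (note $|T|^2<1$, else $|H|=0$), the relation $\langle\nabla_XT,T\rangle=0$ forces $\langle X,T\rangle\equiv 0$, i.e. $T=0$, contradicting $|T|^2>\tfrac{2}{3}$; so case (ii)(b) is in fact empty. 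In all cases $\xi$ is everywhere normal, the $\mathbb{R}$-coordinate is constant on $\Sigma^2$, and the surface lies in a slice $M^3(c)\times\{t_0\}$; thus it is a complete non-minimal cmc surface in $M^3(c)$ with $\nabla\phi_3=\nabla\phi_4=0$, and either $|\phi|^2=0$ or $|\phi|^2=2|H|^2+2c$.

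To conclude: if $|\phi|^2=0$ the surface is totally umbilical and non-minimal in $M^3(c)$, hence a round sphere, which is case (1). If $|\phi|^2=2|H|^2+2c$, the Gauss equation gives $K=0$, so $\Sigma^2$ is a complete flat cmc surface in $M^3(c)$; arguing as in the proof of Theorem~1.5 in \cite{AdC} (see also \cite{WS}) it must be a torus $\mathbb{S}^1(r)\times\mathbb{S}^1(\sqrt{\tfrac{1}{c}-r^2})$, and non-minimality forces $r^2\neq\tfrac{1}{2c}$, which is case (2). The main obstacle is the first step: recognizing that hypothesis (ii) is exactly the condition making the curvature remainder of \eqref{eq:deltasum} nonnegative once it is rewritten via $|A_N|^2=|\nu_3\phi_3+\nu_4\phi_4|^2+2|H|^2\nu_3^2$ and $\nu_3^2\leq 1-|T|^2$, together with the borderline analysis that extracts $T=0$ and shows (ii)(b) is vacuous; the reduction to $M^3(c)$ and the classification of complete flat cmc surfaces there are then standard.
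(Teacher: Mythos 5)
Your proposal is correct and follows essentially the same route as the paper: rewrite the remainder of \eqref{eq:deltasum} via $|A_N|^2-2\langle H,N\rangle^2=|\nu_3\phi_3+\nu_4\phi_4|^2$ and the Schwarz bound $\nu_3^2\le 1-|T|^2$, conclude $\Delta(|\phi|^2-c|T|^2)\ge 0$, invoke Huber's parabolicity, and analyze the equality cases to reduce to $M^3(c)$. The only (harmless) divergence is at the end: you make explicit that case ii)(b) is vacuous by deriving $T=0$ directly from $A_N=|H|\nu_3\id$ and the constancy of $|T|^2$, where the paper instead passes through pseudo-umbilicity and Lemma 3.1 of \cite{AdCT} to place the surface in $M^3(c)$, reaching the same contradiction implicitly.
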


\begin{proof} If $|T|^2=0$, it is easy to see that
\eqref{eq:deltasum} implies
$$
\frac{1}{2}\Delta(|\phi|^2-c|T|^2)\geq\{-|\phi|^2+2c+2|H|^2\}|\phi|^2\geq 0.
$$

Next, let us assume that
$|T|^2>\frac{2}{3}$ and $|H|^2\geq\frac{c|T|^2(1-|T|^2)}{3|T|^2-2}$. Since
$$
|A_N|^2-2\langle H,N\rangle=|\nu_3\phi_3+\nu_4\phi_4|^2\geq 0
$$
and,
from the Schwarz inequality, we have
\begin{equation}\label{eq:schwarz}
\langle H,N\rangle^2\leq
|N|^2|H|^2=(1-|T|^2)|H|^2,
\end{equation}
then, from \eqref{eq:deltasum}, it follows that
$$
\begin{array}{lcl}
\frac{1}{2}\Delta(|\phi|^2-c|T|^2)&\geq&\{-|\phi|^2+\frac{c}{2}(4-5|T|^2)+2|H|^2\}|\phi|^2
\\ \\&&+c(3|T|^2-2)|H|^2-c^2|T|^2(1-|T|^2)\\ \\&\geq&0.
\end{array}
$$

The Gaussian curvature of the surface satisfies
$$
2K=2c(1-|T|^2)+2|H|^2-|\phi|^2\geq\frac{1}{2}c|T|^2\geq 0,
$$
which means that $\Sigma^2$ is a parabolic space. Now, since
$|\phi|^2-c|T|^2$ is a bounded subharmonic function, it follows that
it is constant. Therefore, either $|\phi|^2=0$ or
$|\phi|^2=2|H|^2+2c-\frac{5c}{2}|T|^2$. The first case can be
handled exactly as in the proof of Theorem \ref{th:appl1}. As for
the second case, from \eqref{eq:deltasum}, we have that
$\nu_3\phi_3+\nu_4\phi_4=0$, which means that $A_N=\langle
H,N\rangle\id$, and the equality holds in \eqref{eq:schwarz}, i.e. either
$N=\nu_3H$ or $\xi$ is tangent to the surface. If $\xi$ is tangent to the surface, from \eqref{eq:deltasum} and the hypothesis, it follows that $\Sigma^2$ is a minimal surface, which is a contradiction. Hence, $N=\nu_3H$ and we get $A_H=|H|^2\id$, which, again using
Lemma $3.1$ in \cite{AdCT}, implies that the surface lies in
$M^3(c)$. We then come to the conclusion in the same way as in the proof
of the second part of Theorem \ref{th:appl2}.
\end{proof}

\end{document}